\documentclass[reqno,11pt]{amsart}
\newtheorem{theorem}{Theorem}[section]
\newtheorem{lemma}[theorem]{Lemma}
\newtheorem {proposition}[theorem]{Proposition}
\theoremstyle{definition}
\newtheorem{definition}[theorem]{Definition}

\theoremstyle{remark}
\newtheorem{remark}[theorem]{Remark}

\numberwithin{equation}{section}

\usepackage{geometry}
\geometry{bmargin=2cm,tmargin=2cm,lmargin=3cm,rmargin=3cm}
\usepackage{mathpazo} \usepackage[pdftex]{hyperref}
\newcommand{\norm}[1]{\left\Vert#1\right\Vert}
\newcommand{\scal}[1]{\left<#1\right>}

\newcommand{\C}{\mathbb C}
\newcommand{\R}{\mathbb R}
\newcommand{\Z}{\mathbb Z} 	

\newcommand{\V}{\R^d}

\newcommand{\Fnugr}{\mathcal{F}^{2,\nu}_{\Gamma, \chi}}
\newcommand{\Fnugrt}{\mathcal{F}^{2,\nu}_{\widetilde{\Gamma}, \widetilde{\chi}}}

\newcommand{\normqr}[1]{\left\Vert#1\right\Vert_{\Gamma,\nu}}

\newcommand{\scalqr}[1]{\left<#1\right>_{\Gamma,\nu}}
\newcommand{\normnu}[1]{\left\Vert#1\right\Vert_{\Gamma,H}}
\newcommand{\scalnue}[1]{\left<#1\right>_{\Gamma,H}}
\newcommand{\mes}{d\lambda(x)}
\newcommand{\mess}{d\lambda}
\newcommand{\mesw}{d\lambda(x_1)}
\newcommand{\meswo}{d\lambda(x_2)}
\newcommand{\mesc}{d\lambda(z)}
\newcommand{\meswc}{d\lambda(z_1)}
\newcommand{\meswoc}{d\lambda(z_2)}

\newcommand{\LQ}{\mathcal{C}^{\nu,\infty}_{\Gamma,\chi}(\V)}
\newcommand{\LQc}{L^{2,\nu}_{\Gamma,\chi}(\V)}
\newcommand{\WR}{\mathbb{V}_\Gamma}
\newcommand{\WC}{\mathbb{V}_\C}
\newcommand{\WCo}{{\mathbb{V}_\C}^{\perp_H}}
\newcommand{\WRo}{{\mathbb{V}_\Gamma}^{\perp_{\scal{.,.}}}}

\newcommand{\Vc}{\C^d}

\begin{document}

\title[Likewise theta functions of rank $r$ on $\R^d$]{Likewise theta functions of rank $r$ on $\R^d$: analytic properties and associated Segal-Bargmann transform}

\author{A. Ghanmi}     
\author{A. Intissar}   
\author{Z. Mouhcine}   
\author{M. Ziyat}      
\address{A.G.S.-L.A.M.A, 
          Department of Mathematics, P.O. Box 1014,  Faculty of Sciences,
          Mohammed V University in Rabat, Morocco}

\thanks{A. Ghanmi, A. Intissar and Z. Mouhcine are partially supported by CNRST-URAC/03 and
the Hassan II Academy of Sciences and Technology, Morocco. M. Ziyat is partially supported by the CNRST grant 56UM5R2015, Morocco.}

\subjclass[2010]{Primary 32N05;  Secondary 14K25  }

\keywords{Likewise theta functions; Holomorphic theta functions; Segal-Bargmann transform}

\begin{abstract}
We introduce and study the Hilbert space of $(L^2,\Gamma,\chi)$-likewise theta functions on $\V$ with respect to a given discrete subgroup $\Gamma$ of arbitrary rank and a character $\chi$ of $\Gamma$. A concrete description is given and an orthonormal basis is then constructed.  Its range by the classical Segal-Bargmann transform is also characterized and leads to the so-called theta-Bargmann Fock space.
\end{abstract}

\maketitle

\section{Introduction}
Analytic properties of the holomorphic automorphic functions, associated with a full-rank lattice $\Gamma$ in the $d$-dimensional complex space $\C^d$ and a given mapping $\chi$ on $\Gamma$ with values in unit circle of $\C$, are well studied in the literature. 
Such functions play important roles in number theory and abelian varieties \cite{Igusa1972,Serre1973,BumpFriedbergHoffstein1996,PolishChuk2003}, cryptography and coding theory \cite{Ritzenthaler2004,ShaskaWijesiri2008}, as well as in quantum field theory \cite{Cartier1966}.
Extending these properties to the case of an arbitrary rank discrete subgroup is an interesting area of research. The more recent investigation in this context has been discussed in \cite{GhIn2012,Souid2015} for rank one discrete subgroups and next generalized in \cite{GhInSou2016} to isotropic discrete subgroups of rank less or equal to $d$. The elaboration of these properties lies in the holomorphic character of the considered functions attached to the complex structure of $\C^d$. This tool is lost when working on $\R^d$ instead of $\R^{2d}=\C^d$, where $d$ is not necessary even.
Thus, inspired by the impact of Segal-Bargmann transform on signal processing and time-frequency analysis on the free Hilbert space $L^{2}(\R^d)$
(see for example \cite{grocheniq}) and  motivated by the fact that many signals in practice are quasi-periodic, we will develop and investigate in a natural way a parallel theory for the space $\LQc$ of $(L^2,\Gamma,\chi)$-likewise theta functions associated to an arbitrary discrete subgroup of rank $r$ in $\V$.
For the full-rank lattice $\Gamma$, the Segal-Bargmann transform of $(L^2,\Gamma)$-periodic functions is characterized to be the space of $L^2$-Bloch wave functions \cite{BouchibaIntissar2001}.

The aim of the present paper is then two folds. Firstly, we give explicit description of the elements of $\LQc$ and next construct an explicit orthonormal basis in terms of the modified Fourier expansion and Hermite polynomials.
Secondly, we consider the Segal-Bargmann transform and prove that it maps isometrically the space of $(L^2,\Gamma,\chi)$-likewise theta functions on $\R^d$ onto the  well-studied space of $(L^2,\widetilde{\Gamma},\widetilde{\chi})$-holomorphic theta functions on $\Vc$ considered in \cite{GhInSou2016} for a special pair $(\widetilde{\Gamma};\widetilde{\chi})$. This gives rise to the introduction of the so-called  theta-Segal-Bargmann transform involving an integral representation over a fundamental domain with kernel function given in terms of the multidimensional Reimann theta function with special characteristics.

This paper is organized as follows. Section 2 is devoted to the exact statement of our main results.
In Section 3, we establish some basic properties of the space $\LQc$ of the $(L^2,\Gamma,\chi)$-likewise theta functions on $\R^d$ and give the proof of the Theorem \ref{mainresult1}.
In Section 4, we review the needed properties concerning the space $\Fnugrt(\Vc) $ of $(L^2,\widetilde{\Gamma},\widetilde{\chi})$-holomorphic theta functions considered in \cite{GhInSou2016}.  In Section 5, we prove Theorem \ref{mainresult2} concerning to the characterization of the range of $\LQc$ by the Segal-Bargmann transform as well as Theorem \ref{mainresult3} leading to the notion of theta-Segal-Bargmann transform.

\section{Statement of main results}
In order to give a concise picture of our results, we endow the $d$-dimensional Euclidean space $\V$
with the usual scalar product $\scal{\cdot,\cdot}$ and norm $\|\cdot\|$, and let $\Gamma=\Gamma_r$ be a discrete subgroup of rank $r$; $ r=0,1,\cdots,d$, in the additive group $(\V,+)$. By $\V/\Gamma$ we denote the associated abelian orbital group equipped with the quotient topology and the Haar measure.
Associated with the data of $\Gamma$, a nonnegative real number $\nu$ and a given mapping $\chi$ with values in the unit circle of $\C$ on $\Gamma$, we consider the functional space $\LQc$ of Borel measurable functions $f$ on $\V$ satisfying the functional equation
\begin{equation}\label{FctEq2}
	f(x+\gamma) = \chi(\gamma)  e^{\nu \scal{x+\frac{\gamma}2, \gamma} } f(x),
\end{equation}
for almost every $x\in\V$ and every $\gamma\in\Gamma$, and
\begin{equation}\label{Intrnorm}
	\normqr{f}^2 := \int_{\Lambda(\Gamma)} |f(x)|^2  e^{-\nu \norm{x}^2} \mes<\infty.
\end{equation}
Here $\Lambda(\Gamma)$ is a fundamental domain of $\Gamma$ in $\V$ representing $\V/\Gamma$ and gives rise to a compact fundamental domain of $\Gamma$ in $\WR=\mbox{Span}_\R(\Gamma)$, the $r$-dimensional real vector space generated by $\Gamma$.
Notice that the quantity $\normqr{\cdot}$ makes sense and it is independent of the choice of the fundamental domain, for the function $x\longmapsto|f(x)|^2e^{-\nu\norm{x}^2}$ being a $\Gamma$-periodic on $\V$ for every given $f$ satisfying \eqref{FctEq2}.
Furthermore, $\normqr{\cdot}$ defines a norm on $\LQc$. The corresponding scalar product is given by
\begin{equation}\label{IntrScal1}
\scalqr{f,g} := \int_{\Lambda(\Gamma)} f(x) \overline{g(x)}  e^{-\nu \norm{x}^2} \mes .
\end{equation}
We will show that the functional space $\LQc$ is nontrivial if and only if $\chi$ is a character. In this case $\chi$ has a representation of the form $\chi(\gamma)=e^{2i\pi\scal{\gamma,v_\chi}}$ (see Lemma \ref{Lem:condidatFct}).
Under such condition, we give a concrete description of $\LQc$ using Fourier analysis related to the dual lattice $\Gamma^*$ of $\Gamma$. Namely,
by identifying $\V$ to $\WR \times \WRo$, where $\WRo$ is the complement orthogonal of $\WR$ in $\V$ with respect to $\scal{.,.}$, and denoting
by $ \mathbf{H}^\nu_{\mathbf{k}}$ the $(d-r)$-dimensional Hermite polynomials, we can assert the following.

	\begin{theorem} \label{mainresult1}
 The family of functions on $\WR \times \WRo$ defined by
		\begin{equation}
		e_{{\gamma^*},\mathbf{k}}(x)=e_{{\gamma^*},\mathbf{k}}(x_1,x_2) := e^{\frac{\nu}2 \scal{x_1,x_1}+ 2\pi i \scal{v_\chi+{\gamma^*},x_1} }
          \mathbf{H}^\nu_{\mathbf{k}}(x_2)    
		\end{equation}
		for varying ${\gamma^*}\in \Gamma^*$ of $\Gamma$ and  $\mathbf{k}=(k_{1},  \cdots ,  k_{d-r})\in(\Z^+)^{d-r}$,
        constitutes an orthogonal basis of $\LQc$ with
        \begin{equation}			
		\normqr{ e_{{\gamma^*},\mathbf{k}}}^2=vol(\Lambda_1(\Gamma))\left(\frac{\pi}{\nu}\right)^{(d-r)/2}	 2^{|\mathbf{k}|}\mathbf{k}!.
		\end{equation}
	\end{theorem}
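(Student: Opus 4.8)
The plan is to reduce the statement to two classical facts---the completeness of the characters of the torus $\WR/\Gamma$ and of the Hermite functions on $\WRo$---by trivializing the automorphy factor in \eqref{FctEq2}. I first check that each $e_{\gamma^*,\mathbf{k}}$ belongs to $\LQc$. Writing $x=(x_1,x_2)$ and using $\Gamma\subset\WR$, so that $\scal{x,\gamma}=\scal{x_1,\gamma}$ and $\scal{\gamma^*,\gamma}\in\Z$, a direct substitution of $x_1\mapsto x_1+\gamma$ in the exponent $\tfrac\nu2\scal{x_1,x_1}+2\pi i\scal{v_\chi+\gamma^*,x_1}$ produces exactly the factor $\chi(\gamma)e^{\nu\scal{x+\gamma/2,\gamma}}$ of \eqref{FctEq2}: the integrality of $\scal{\gamma^*,\gamma}$ kills the $\gamma^*$-contribution and the representation $\chi(\gamma)=e^{2\pi i\scal{\gamma,v_\chi}}$ supplies the character, while the $x_2$-factor $\mathbf{H}^\nu_{\mathbf{k}}(x_2)$ is unaffected since $x_2\perp\gamma$. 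Finiteness of the norm will follow from the last step.

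The key step is to realize $\LQc$ as a Hilbert tensor product. Since $\Gamma$ acts by translation only on the $\WR$-component, a fundamental domain may be taken as $\Lambda(\Gamma)=\Lambda_1(\Gamma)\times\WRo$, and with the orthogonal splitting one has $\norm{x}^2=\scal{x_1,x_1}+\scal{x_2,x_2}$ and $\mes=\mesw\,\meswo$. I then introduce the map $U$ sending $f\in\LQc$ to $g(x_1,x_2):=f(x_1,x_2)\,e^{-\frac\nu2\scal{x_1,x_1}-2\pi i\scal{v_\chi,x_1}}$. The same algebra as in the first step shows, conversely, that the functional equation \eqref{FctEq2} for $f$ is equivalent to $\Gamma$-periodicity of $g$ in $x_1$, while $\abs{g(x_1,x_2)}^2=\abs{f(x_1,x_2)}^2e^{-\nu\scal{x_1,x_1}}$ gives $\normqr{f}^2=\int_{\Lambda_1(\Gamma)\times\WRo}\abs{g}^2e^{-\nu\scal{x_2,x_2}}\mesw\meswo$. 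Hence $U$ is a unitary isomorphism of $\LQc$ onto $L^2(\WR/\Gamma)\otimes L^2(\WRo,e^{-\nu\scal{x_2,x_2}}\meswo)$, and it carries $e_{\gamma^*,\mathbf{k}}$ to the elementary tensor $e^{2\pi i\scal{\gamma^*,x_1}}\otimes\mathbf{H}^\nu_{\mathbf{k}}(x_2)$.

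It then suffices to recall that $\{e^{2\pi i\scal{\gamma^*,x_1}}\}_{\gamma^*\in\Gamma^*}$ is an orthogonal basis of $L^2(\WR/\Gamma)$---this is exactly Fourier analysis on the $r$-dimensional torus, the defining relation $\scal{\gamma^*,\gamma}\in\Z$ of the dual lattice guaranteeing both periodicity and the orthogonality relation $\int_{\Lambda_1(\Gamma)}e^{2\pi i\scal{\gamma^*-\sigma^*,x_1}}\mesw=vol(\Lambda_1(\Gamma))\,\delta_{\gamma^*,\sigma^*}$---and that the multivariate Hermite polynomials $\{\mathbf{H}^\nu_{\mathbf{k}}\}_{\mathbf{k}\in(\Z^+)^{d-r}}$ form an orthogonal basis of $L^2(\WRo,e^{-\nu\scal{x_2,x_2}}\meswo)$. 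Since products of orthogonal bases of the two factors form an orthogonal basis of the Hilbert tensor product, orthogonality and completeness of the $e_{\gamma^*,\mathbf{k}}$ follow by transporting back through the unitary $U$. I expect the only genuine subtlety to lie in the bookkeeping around the product fundamental domain $\Lambda_1(\Gamma)\times\WRo$ and the surjectivity of $U$; everything else is a transcription of the two classical orthogonal-basis theorems.

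Finally, orthogonality having decoupled the two variables, the norm factorizes as $\normqr{e_{\gamma^*,\mathbf{k}}}^2=\big(\int_{\Lambda_1(\Gamma)}\mesw\big)\big(\int_{\WRo}\abs{\mathbf{H}^\nu_{\mathbf{k}}(x_2)}^2e^{-\nu\scal{x_2,x_2}}\meswo\big)$, the first factor being $vol(\Lambda_1(\Gamma))$. The second factor is the product over the $d-r$ coordinates of the one-dimensional Hermite norms $\int_\R \abs{H^\nu_{k_j}(t)}^2e^{-\nu t^2}\,dt=\sqrt{\pi/\nu}\,2^{k_j}k_j!$, which yields $(\pi/\nu)^{(d-r)/2}2^{\abs{\mathbf{k}}}\mathbf{k}!$ and hence the announced value; in particular the finiteness asserted in the first step is confirmed.
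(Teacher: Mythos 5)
Your proof is correct and follows essentially the same route as the paper: the multiplier $e^{-\frac{\nu}{2}\scal{x_1,x_1}-2\pi i\scal{v_\chi,x_1}}$ defining your map $U$ is exactly the substitution the paper uses to reduce the functional equation \eqref{FctEq2} to $\Gamma$-periodicity, and both arguments then rest on the same two ingredients, namely Fourier analysis with the dual-lattice characters on $\WR/\Gamma$ and completeness of the Hermite polynomials in $L^2(\WRo,e^{-\nu\scal{x_2,x_2}}\meswo)$. The only difference is organizational: you package the Fubini--Parseval computation as a unitary equivalence with a Hilbert tensor product and invoke the tensor-product-of-bases fact, whereas the paper writes out the double expansion explicitly and obtains completeness from the uniqueness of the Fourier--Hermite coefficients.
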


The characterization of the range of $\LQc$ by the Segal-Bargmann transform,
\begin{align}\label{bargmannint}
		[\mathcal{B}\varphi](z) =\left(\frac{\nu}{\pi}\right)^{\frac{3d}{4}}
		\int_{\V} e^{\sqrt{2}\nu \scal{z,x} -\frac{\nu}{2}\scal{z,z}} \varphi(x)e^{-\nu \norm{x}^2}\mes; \quad z\in \Vc,
		\end{align}
involves the shifted lattice $\widetilde{\Gamma}:=\Gamma/{\sqrt{2}}$ and the $\widetilde{\Gamma}$-character defined by
$\widetilde{\chi}(\widetilde{\gamma})=e^{2i\pi\scal{\widetilde{\gamma},\sqrt{2}v_\chi}}. $
The natural extension of $\scal{\cdot,\cdot}$ on $\V$ to $\Vc=\WC \oplus \WCo$ with $\WC=\WR+i\WR$ is also denoted by $\scal{\cdot,\cdot}$.

		\begin{theorem}\label{mainresult2}
        The Segal-Bargmann transform $\mathcal{B}$ given through \eqref{bargmannint} defines an isometric isomorphism form $\LQc$ onto $\Fnugrt(\Vc)$.
		\end{theorem}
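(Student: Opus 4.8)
The plan is to prove both the isometry and the surjectivity by computing the action of $\mathcal{B}$ on the orthogonal basis $\{e_{\gamma^*,\mathbf{k}}\}$ of $\LQc$ furnished by Theorem \ref{mainresult1} and matching it against the orthogonal basis of $\Fnugrt(\Vc)$ recalled in Section 4. First I would check that $\mathcal{B}$ is well defined on $\LQc$: for $f$ satisfying \eqref{FctEq2}, the function $x\mapsto\abs{f(x)}e^{-\frac{\nu}2\norm{x}^2}$ is $\Gamma$-periodic, hence bounded, so the integrand in \eqref{bargmannint} is dominated by a convergent Gaussian uniformly for $z$ in compact subsets of $\Vc$. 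Thus the integral converges absolutely and, the kernel being entire in $z$, defines a holomorphic function on $\Vc$ (by Morera's theorem together with Fubini, or by differentiation under the integral sign). That $\mathcal{B}f$ obeys the $(\widetilde{\Gamma},\widetilde{\chi})$-automorphy defining $\Fnugrt(\Vc)$ can be checked directly by replacing $z$ by $z+\widetilde{\gamma}$ and performing a suitable $\Gamma$-translation of the integration variable $x$ (recall $\sqrt2\,\widetilde{\gamma}\in\Gamma$), after which \eqref{FctEq2} produces the character $\chi(\sqrt2\,\widetilde{\gamma})=\widetilde{\chi}(\widetilde{\gamma})$ and the expected Gaussian automorphy factor; alternatively this follows a posteriori from the basis computation below.

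The core step is the explicit evaluation of $\mathcal{B}e_{\gamma^*,\mathbf{k}}$. Using the orthogonal splitting $\V=\WR\times\WRo$, the kernel and every factor of $e_{\gamma^*,\mathbf{k}}$ separate, so the integral factorizes into an integral over $\WR$ in the variable $x_1$ and one over $\WRo$ in the variable $x_2$. The $\WRo$-integral reduces coordinatewise to the classical identity $\int_\R e^{-s^2}H_k(s)e^{2ps}\,ds=\sqrt{\pi}\,e^{p^2}(2p)^k$ (read off from the Hermite generating function), which turns $\mathbf{H}^\nu_{\mathbf{k}}(x_2)$ into the holomorphic monomial $z_2^{\mathbf{k}}$, up to the factor $e^{\frac{\nu}2\scal{z_2,z_2}}$ and an explicit constant. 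The $\WR$-integral is a Gaussian integral: the growing factor $e^{\frac{\nu}2\scal{x_1,x_1}}$ combines with the weight $e^{-\nu\norm{x_1}^2}$ to leave a decaying Gaussian, and completing the square in $x_1$ against the complex linear term $\sqrt2\nu z_1+2\pi i(v_\chi+\gamma^*)$ produces $e^{\frac{\nu}2\scal{z_1,z_1}+2\sqrt2\pi i\scal{v_\chi+\gamma^*,z_1}}$ times a $\gamma^*$-dependent constant. Collecting the two factors yields $\mathcal{B}e_{\gamma^*,\mathbf{k}}=c_{\gamma^*,\mathbf{k}}\,\widetilde{e}_{\gamma^*,\mathbf{k}}$, where $\widetilde{e}_{\gamma^*,\mathbf{k}}(z)=e^{\frac{\nu}2\scal{z_1,z_1}+2\sqrt2\pi i\scal{v_\chi+\gamma^*,z_1}}z_2^{\mathbf{k}}$ is, after identifying $\sqrt2\,\gamma^*\in\sqrt2\,\Gamma^*=\widetilde{\Gamma}^*$ and $\sqrt2\,v_\chi=v_{\widetilde{\chi}}$, exactly the orthogonal basis element of $\Fnugrt(\Vc)$ from Section 4.

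It remains to assemble the isometry. Since each $\mathcal{B}e_{\gamma^*,\mathbf{k}}$ is a nonzero multiple of the corresponding basis vector $\widetilde{e}_{\gamma^*,\mathbf{k}}$, the images form a complete orthogonal system in $\Fnugrt(\Vc)$ and $\mathcal{B}$ carries the basis of $\LQc$ bijectively onto that of $\Fnugrt(\Vc)$. I would then compute $\|\mathcal{B}e_{\gamma^*,\mathbf{k}}\|_{\widetilde{\Gamma},H}^2$ and verify it equals $\normqr{e_{\gamma^*,\mathbf{k}}}^2$: the $\gamma^*$-dependent factor $\abs{c_{\gamma^*,\mathbf{k}}}^2$ cancels against the $\gamma^*$-dependent Gaussian arising in the norm of $\widetilde{e}_{\gamma^*,\mathbf{k}}$ (the integral over the imaginary directions of $\WC$ in the fundamental domain of $\widetilde{\Gamma}$), while the global prefactor $(\nu/\pi)^{3d/4}$ is tuned precisely so that the remaining constants agree and the ratio is exactly $1$. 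By orthogonality of both systems, norm preservation on basis vectors upgrades to norm preservation on finite linear combinations, and by density and continuity $\mathcal{B}$ extends to a linear isometry of $\LQc$ onto $\Fnugrt(\Vc)$; injectivity is automatic, and surjectivity holds because the closed image contains a complete system. I expect the main obstacle to be the $\WR$-Gaussian integral, namely controlling the growing Gaussian and correctly completing the square with the complex, lattice-dependent linear term, together with the constant bookkeeping that confirms the isometry holds exactly with the stated normalization; the Hermite (Fock) part is classical.
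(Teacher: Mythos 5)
Your core route --- evaluating $\mathcal{B}$ on the orthogonal basis $e_{\gamma^*,\mathbf{k}}$, splitting the integral over $\WR\times\WRo$ so that the $\WRo$-part is the classical Bargmann--Hermite integral producing $z_2^{\mathbf{k}}$ and the $\WR$-part is a Gaussian integral whose completion of the square produces $e^{\frac{\nu}{2}\scal{z_1,z_1}+2\pi i\scal{z_1,\widetilde{\gamma}^*+\sqrt{2}v_\chi}}$ with $\widetilde{\gamma}^*=\sqrt{2}\gamma^*\in\widetilde{\Gamma}^*$, then matching norms and invoking completeness of both orthogonal systems --- is exactly the paper's proof of Theorem \ref{mainresult2}. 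Your bookkeeping is in fact slightly more careful than the paper's: you keep the $\gamma^*$-dependent constant $c_{\gamma^*,\mathbf{k}}$ and note that $|c_{\gamma^*,\mathbf{k}}|^2$ must cancel against the factor $e^{\frac{2\pi^2}{\nu}\scal{\gamma^*+v_\chi,\gamma^*+v_\chi}}$ appearing in \eqref{norm-basiss}, which is precisely what makes the map an exact isometry; the paper writes $\mathcal{B}e_{\gamma^*,\mathbf{k}}=\varphi_{\widetilde{\gamma}^*,\mathbf{k}}$ with this constant suppressed.

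There is, however, a genuine error in your well-definedness step. From \eqref{FctEq2} the function $g(x)=|f(x)|e^{-\frac{\nu}{2}\norm{x}^2}$ is indeed $\Gamma$-periodic, but ``periodic, hence bounded'' fails on two counts. First, $\Gamma$ has rank $r\le d$, so the fundamental domain $\Lambda(\Gamma)\simeq\Lambda_1(\Gamma)\times\WRo$ is non-compact whenever $r<d$: periodicity gives no control whatsoever in the $\WRo$-directions. Second, even over a compact quotient, membership in $\LQc$ only makes $g$ square-integrable on $\Lambda(\Gamma)$, and an $L^2$ function need not be bounded. So your claimed Gaussian domination collapses, and with it the justification of absolute and locally uniform convergence (hence holomorphy) of $\mathcal{B}f$; note that this same estimate is tacitly needed again at the end, when you pass from basis vectors to general $f$ ``by density and continuity'' --- identifying the integral $\mathcal{B}f$ with the $L^2$-limit $\sum a_{\gamma^*,\mathbf{k}}\mathcal{B}e_{\gamma^*,\mathbf{k}}$ requires that point evaluations of $\mathcal{B}$ be controlled locally uniformly by $\normqr{\cdot}$. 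The paper's first proposition of Section 5 supplies the correct argument: write $\V=\bigcup_{\gamma\in\Gamma}(\gamma+\Lambda(\Gamma))$, use \eqref{FctEq2} to transport each translate back to $\Lambda(\Gamma)$, apply the Cauchy--Schwarz inequality against the finite norm $\normqr{f}$ (the $\WRo$-directions being handled by the genuine Gaussian weight via Lemma \ref{gaussIntegral}), and sum the resulting Gaussian series $\sum_{\gamma\in\Gamma}e^{-\frac{\nu}{8}\scal{\gamma,\gamma}}$, yielding $|[\mathcal{B}f](z)|\le C_K\normqr{f}$ for $z$ in a compact set $K$. With that substitution, your proof is complete and coincides with the paper's.
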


The last theorem of this paper gives another integral representation for $\mathcal{B}$ when restricted to $\LQc$. This representation is a coherent state transform and encodes the Hilbert structures of $\LQc$ and $\Fnugrt(\Vc)$. In fact, the kernel function is the bilateral generating function making appeal of both bases. Moreover, it can be expressed in terms of the multidimensional Riemann theta function $\Theta_{\alpha,\beta} (z \big |  \Omega)$ with special characteristics. To this end, recall that (\cite{Riemann1857,Mumford83}):
	\begin{align}\label{defTheta}
	\Theta_{\alpha,\beta} (z \big |  \Omega)
	= \sum\limits_{n\in\Z^r} 	e^{2i\pi \left\{ \frac 12 (\alpha+n)\Omega(\alpha+n) + (\alpha+n)(z+\beta)\right\} }
	\end{align}
where $\alpha,\beta\in \R^r$ and $\Omega$ a symmetric matrix in $\C^{r\times r}$ with strictly positive definite imaginary part.
The positive definiteness of $\Im(\Omega)$ guarantees the convergence of \eqref{defTheta} on $\C^{r}$.

		\begin{theorem}\label{mainresult3}  The Segal-Bargmann transform $\mathcal{B}$ on $\LQc$ is also given by
			\begin{equation}
			[\mathcal{B}\varphi](z)=  \int_{\Lambda(\Gamma)} A^\nu_{\Gamma,\chi}(z;x) \varphi(x)e^{-\nu \|x\|^2} \mes; \quad z\in\Vc,
			\end{equation}
			where the kernel function $A^\nu_{\Gamma,\chi}(z;x)$ is given in terms of the modified theta function as follows
			\begin{equation}
			A^\nu_{\Gamma,\chi}(z;x)=\left(\frac{\nu}{\pi}\right)^{\frac{3d}{4}}e^{\sqrt{2}\nu \scal{z,x}-\frac{\nu}{2}\scal{z,z}}
            \Theta_{0,G\beta_\chi}\left(\frac{i\nu}{2\pi}G(x_1-\sqrt{2}z_1)\bigg|\frac{i\nu}{2\pi}G\right).
			\end{equation}
			Here $x=x_1+x_2\in \WR \oplus \WRo$, $z=z_1+z_2\in \WC \oplus \WCo$, $G:= \left(\scal{\omega_i,\omega_j}\right)_{1\leq i,j\leq r}$ is the Gram-Schmidt matrix of the form $\scal{\cdot,\cdot}$ restricted to the vector space $\WR$ generated by a basis $\{\omega_i; i=1,2, \cdots,r\}$ of $\Gamma$, and $\beta_\chi=(\beta_1,\cdots,\beta_r)\in\R^r$ are the coordinates of $v_\chi$.
		\end{theorem}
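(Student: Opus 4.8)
The plan is to start from the defining integral \eqref{bargmannint} over all of $\V$ and fold it onto a single fundamental domain by means of the $\Gamma$-quasi-periodicity \eqref{FctEq2}, thereby producing a lattice sum that will be recognised as a Riemann theta series. Since $\Lambda(\Gamma)$ tiles $\V$ under translation by $\Gamma$ and $\lambda$ is translation-invariant, I would first split $\int_\V = \sum_{\gamma\in\Gamma}\int_{\Lambda(\Gamma)+\gamma}$ and substitute $x\mapsto x+\gamma$ on each cell, obtaining
\begin{equation*}
[\mathcal{B}\varphi](z) = \left(\frac{\nu}{\pi}\right)^{\frac{3d}{4}} \sum_{\gamma\in\Gamma} \int_{\Lambda(\Gamma)} e^{\sqrt{2}\nu\scal{z,x+\gamma} - \frac{\nu}{2}\scal{z,z}} \varphi(x+\gamma)\, e^{-\nu\norm{x+\gamma}^2}\mes.
\end{equation*}

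Next I would insert the functional equation \eqref{FctEq2}, namely $\varphi(x+\gamma) = \chi(\gamma)\, e^{\nu\scal{x+\frac{\gamma}{2},\gamma}}\varphi(x)$, together with the expansions $\norm{x+\gamma}^2 = \norm{x}^2 + 2\scal{x,\gamma} + \norm{\gamma}^2$ and $\scal{z,x+\gamma}=\scal{z,x}+\scal{z,\gamma}$. The Gaussian cross-term $e^{-2\nu\scal{x,\gamma}}$ partly cancels the factor $e^{\nu\scal{x,\gamma}}$ coming from the quasi-periodicity, and collecting the purely $\gamma$-dependent and mixed contributions into a single bracket yields
\begin{equation*}
[\mathcal{B}\varphi](z) = \left(\frac{\nu}{\pi}\right)^{\frac{3d}{4}} e^{-\frac{\nu}{2}\scal{z,z}} \int_{\Lambda(\Gamma)} e^{\sqrt{2}\nu\scal{z,x}}\, \varphi(x)\, e^{-\nu\norm{x}^2} \left[\sum_{\gamma\in\Gamma} \chi(\gamma)\, e^{\sqrt{2}\nu\scal{z,\gamma} - \nu\scal{x,\gamma} - \frac{\nu}{2}\norm{\gamma}^2}\right]\mes.
\end{equation*}
The bracketed lattice sum is exactly the desired kernel up to the prefactor $(\nu/\pi)^{3d/4}e^{\sqrt{2}\nu\scal{z,x}-\frac{\nu}{2}\scal{z,z}}$, so it remains only to identify it with the theta function.

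To do so I would pass to the basis $\{\omega_i\}$ of $\Gamma$, writing $\gamma=\sum_i n_i\omega_i$ with $n\in\Z^r$. Since $\gamma\in\WR$, only the $\WR$-components $x_1,z_1$ enter $\scal{z,\gamma}$ and $\scal{x,\gamma}$, and in the chosen coordinates these become $n^{t}Gc$ and $n^{t}Gb$, where $G$ is the Gram matrix and $b,c$ denote the coordinate vectors of $x_1,z_1$; likewise $\norm{\gamma}^2=n^{t}Gn$, and by Lemma \ref{Lem:condidatFct} one has $\chi(\gamma)=e^{2i\pi\scal{\gamma,v_\chi}}=e^{2i\pi\, n^{t}G\beta_\chi}$. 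Substituting these and comparing with \eqref{defTheta} for $\alpha=0$, characteristic $\beta=G\beta_\chi$, modulus $\Omega=\frac{i\nu}{2\pi}G$ and argument $\frac{i\nu}{2\pi}G(x_1-\sqrt{2}z_1)$, the quadratic term reproduces $-\frac{\nu}{2}n^{t}Gn$, the linear term reproduces $\sqrt{2}\nu\scal{z,\gamma}-\nu\scal{x,\gamma}$, and the half-integer characteristic reproduces $\chi(\gamma)$. This gives precisely the stated expression for $A^\nu_{\Gamma,\chi}(z;x)$.

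The main technical point to handle with care is the interchange of the summation over $\Gamma$ with the integration over $\Lambda(\Gamma)$. This is where the positive definiteness of $\Im(\Omega)=\frac{\nu}{2\pi}G$ — equivalently, the fact that $G$ is the Gram matrix of a basis — is decisive: the factor $e^{-\frac{\nu}{2}\norm{\gamma}^2}$ dominates the at most linear growth $e^{(\sqrt{2}\nu\norm{z}+\nu R)\norm{\gamma}}$ arising from $\scal{z,\gamma}$ and $\scal{x,\gamma}$ (with $\norm{x}\le R$ on the compact $\Lambda(\Gamma)$), forcing absolute convergence of the theta series uniformly on compacta in $z$. Since moreover $\varphi\, e^{-\frac{\nu}{2}\norm{\cdot}^2}\in L^2(\Lambda(\Gamma))$ by \eqref{Intrnorm}, a Cauchy--Schwarz estimate controls the remaining $x$-integral, and Fubini--Tonelli then legitimises every rearrangement above. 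The verification of the coordinate identities for $\scal{z,\gamma}$, $\scal{x,\gamma}$ and $\chi(\gamma)$ is then purely routine once the normalisation $\Omega=\frac{i\nu}{2\pi}G$ is fixed.
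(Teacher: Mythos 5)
Your proposal takes essentially the same route as the paper's proof: split $\int_{\V}$ over the translates $\gamma+\Lambda(\Gamma)$, apply the functional equation \eqref{FctEq2}, and recognize the resulting lattice sum $\sum_{\gamma\in\Gamma}\chi(\gamma)e^{-\frac{\nu}{2}\scal{\gamma,\gamma}+\nu\scal{\sqrt{2}z-x,\gamma}}$ as $\Theta_{0,G\beta_\chi}\left(\frac{i\nu}{2\pi}G(x_1-\sqrt{2}z_1)\,\Big|\,\frac{i\nu}{2\pi}G\right)$; this is precisely the paper's passage from \eqref{bseries1} to the theta identification, with your Fubini discussion playing the role of the convergence estimates already established in the paper's well-definedness proposition. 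One slip to correct: when $r<d$ the fundamental domain $\Lambda(\Gamma)\simeq\Lambda_1(\Gamma)\times\WRo$ is \emph{not} compact, so your claim ``$\norm{x}\le R$ on the compact $\Lambda(\Gamma)$'' is false as stated; the estimate nevertheless survives because $\gamma\in\WR$ forces $\scal{x,\gamma}=\scal{x_1,\gamma}$ with $x_1$ ranging over the compact $\Lambda_1(\Gamma)$, while the unbounded $\WRo$-direction of the remaining integral is controlled by the Gaussian weight $e^{-\nu\norm{x_2}^2}$, exactly as in the paper's treatment of the quantity $I_{\nu,\Gamma}(\varphi)$.
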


\section{Basic properties of $\LQc$ and proof of Theorem \ref{mainresult1} }

Keep notations as above and provide the following definition.

\begin{definition}
	The space $\LQc$ will be called the space of $(L^2,\Gamma,\chi)$-likewise theta functions on $\V$ attached to the discrete subgroup $\Gamma$.
    Similarly, we define $\LQ$ to be the space of $(\mathcal{C}^\infty,\Gamma,\chi)$-likewise theta functions, that is the space of all $\mathcal{C}^\infty$-complex-valued functions $f$ on $\V$ satisfying
	\begin{equation}\label{FctEq1}
		f(x+\gamma) = \chi(\gamma)  e^{ \nu\scal{x+\frac{\gamma}2, \gamma} } f(x),
	\end{equation}
	for every $ x\in\V$ and $\gamma\in\Gamma$. 	
\end{definition}

The existence of such spaces is encoded in the data $(\nu,\Gamma,\chi)$. In fact, the following result gives
a necessary and sufficient condition to $\LQc$ and $\LQ$ be nontrivial.

\begin{proposition}
The vector space  $\LQc$   (resp. $\LQ$) is nonzero if and only if $\chi$ is a character on
$\Gamma$, i.e., for all $\gamma,\gamma'\in\Gamma$, we have
\begin{equation}\label{caracter}
\chi(\gamma+\gamma')=\chi(\gamma)\chi(\gamma').
\end{equation}
\end{proposition}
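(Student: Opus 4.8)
The whole statement is governed by a cocycle condition. Writing the automorphy factor as $j_\gamma(x):=\chi(\gamma)e^{\nu\scal{x+\frac{\gamma}{2},\gamma}}$, the functional equation \eqref{FctEq2} reads $f(x+\gamma)=j_\gamma(x)f(x)$, and evaluating $f(x+\gamma+\gamma')$ in two ways (translating first by $\gamma'$ then by $\gamma$, versus translating by $\gamma+\gamma'$ at once) forces, wherever $f$ does not vanish, the relation
\begin{equation*}
j_{\gamma+\gamma'}(x)=j_{\gamma}(x+\gamma')\,j_{\gamma'}(x).
\end{equation*}
The first thing I would check is that the Gaussian part $(x,\gamma)\mapsto e^{\nu\scal{x+\frac{\gamma}{2},\gamma}}$ already satisfies this relation identically: expanding $\scal{x+\frac{\gamma+\gamma'}{2},\gamma+\gamma'}$ by bilinearity and symmetry of $\scal{\cdot,\cdot}$ shows the two exponents coincide for every $x$. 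Hence the cocycle relation collapses to the single multiplicative constraint $\chi(\gamma+\gamma')=\chi(\gamma)\chi(\gamma')$, and this elementary identity is the heart of the proof; both implications are extracted from it.

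For $(\Rightarrow)$, I would take $0\neq f$ in the space and chain the three instances of the functional equation for $\gamma'$ at $x$, for $\gamma$ at the shifted point $x+\gamma'$, and for $\gamma+\gamma'$ at $x$. Since each $j_\gamma(x)$ is nowhere zero and the Gaussian exponents cancel by the previous step, this yields $\chi(\gamma+\gamma')f(x)=\chi(\gamma)\chi(\gamma')f(x)$. In the smooth case $\LQ$ one simply evaluates at any point where $f\neq0$ and divides. In the $L^2$ case $\LQc$ a little bookkeeping is needed, since each instance holds only almost everywhere: because $\Gamma$ is countable and Lebesgue measure is translation invariant, removing the countable union of the exceptional null sets (including their $\gamma'$-translates) leaves a co-null set on which all relations hold simultaneously for every pair $(\gamma,\gamma')$; intersecting it with the positive-measure set $\{f\neq0\}$ and dividing gives \eqref{caracter}.

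For $(\Leftarrow)$ I would exhibit one explicit nonzero element. First, using that $\scal{\cdot,\cdot}$ restricted to $\WR$ is nondegenerate (equivalently, the Gram matrix of a basis of $\Gamma$ is invertible), I solve the resulting linear system to realize $\chi(\gamma)=e^{2i\pi\scal{\gamma,v_\chi}}$ for a suitable $v_\chi\in\WR$; this is Lemma \ref{Lem:condidatFct}. Writing $x=x_1+x_2\in\WR\oplus\WRo$, I then propose
\begin{equation*}
f(x)=e^{\frac{\nu}{2}\scal{x_1,x_1}+2i\pi\scal{v_\chi,x_1}}\,\psi(x_2)
\end{equation*}
and verify \eqref{FctEq2}: since $\gamma\in\WR$ is orthogonal to $\WRo$ one has $\scal{x,\gamma}=\scal{x_1,\gamma}$, so the $x_2$-factor is untouched while the $x_1$-factor reproduces exactly $\chi(\gamma)e^{\nu\scal{x+\frac{\gamma}{2},\gamma}}$. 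This $f$ is $\mathcal C^\infty$, so with $\psi\equiv1$ it already shows $\LQ\neq0$, and it also satisfies \eqref{FctEq1}.

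For membership in $\LQc$ I would compute $\normqr{f}^2=\mathrm{vol}\big(\Lambda_1(\Gamma)\big)\int_{\WRo}|\psi(x_2)|^2e^{-\nu\norm{x_2}^2}\meswo$, the $\WR$-Gaussian $e^{\nu\norm{x_1}^2}$ coming from $|f|^2$ being exactly cancelled by the weight $e^{-\nu\norm{x}^2}$ over the compact piece $\Lambda_1(\Gamma)$. For $\nu>0$ the choice $\psi\equiv1$ gives a convergent Gaussian integral over $\WRo$, while for $\nu=0$ (where the fundamental domain is non-compact and carries no weight) I would take any $0\neq\psi\in L^2(\WRo)$. I expect the genuine obstacle to lie exactly here: the cocycle algebra is routine, but one must respect the splitting $\V\cong\WR\times\WRo$ and use the weight $e^{-\nu\norm{x}^2}$ (or a decaying $\psi$) to control the unbounded directions $\WRo$. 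As a fully self-contained alternative avoiding the representation of $\chi$, one can instead periodize a nonzero $\phi\in C_c^\infty(\V)$ via $f(x)=\sum_{\gamma\in\Gamma}j_\gamma(x)^{-1}\phi(x+\gamma)$: the cocycle relation makes $f$ transform correctly, discreteness of $\Gamma$ makes the sum locally finite, and choosing $\mathrm{supp}\,\phi$ small enough that its $\Gamma$-translates are pairwise disjoint guarantees both $f\neq0$ and finiteness of the norm for all $\nu\geq0$.
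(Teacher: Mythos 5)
Your proof is correct, and both of its ingredients already appear in the paper, with the emphasis reversed. The necessity direction is the paper's argument almost verbatim: evaluate $f(x+\gamma+\gamma')$ in two ways and divide at a point where $f\neq 0$, handling the almost-everywhere issue for $\LQc$ by discarding a countable union of translated null sets (the paper's set $\widetilde{\mathcal{D}_f}=\cup_{\gamma\in\Gamma}(D_f+\gamma)$); your cocycle formulation $j_{\gamma+\gamma'}(x)=j_\gamma(x+\gamma')\,j_{\gamma'}(x)$ is a tidier packaging of the same computation. For sufficiency, the paper's official proof is what you call your ``fully self-contained alternative'': since $j_\gamma(x)^{-1}=\overline{\chi(\gamma)}e^{-\nu\scal{x+\frac{\gamma}{2},\gamma}}$, your periodization $\sum_{\gamma\in\Gamma}j_\gamma(x)^{-1}\phi(x+\gamma)$ is literally the paper's Poincar\'e series $\mathcal{P}_{\Gamma,\chi}$ applied to a $\mathcal{C}^\infty$ function supported in the interior of $\Lambda(\Gamma)$, while your primary route --- the explicit function $e^{\frac{\nu}{2}\scal{x_1,x_1}+2i\pi\scal{v_\chi,x_1}}\psi(x_2)$ built from the representation $\chi(\gamma)=e^{2i\pi\scal{v_\chi,\gamma}}$ --- is exactly the alternative the paper relegates to the remark following the proposition and to Lemma \ref{Lem:condidatFct}. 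One point where your version is genuinely sharper: for $\nu=0$ and $r<d$ the paper's candidate $\psi_{v_\chi}$ of Lemma \ref{Lem:condidatFct}(ii) has infinite norm (the integral of $1$ over $\WRo$ diverges), so it does not in fact lie in $\LQc$ as stated there; your insertion of a square-integrable factor $\psi(x_2)$ on $\WRo$ --- or the Poincar\'e-series route, which is insensitive to $\nu$ --- repairs that edge case.
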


\begin{proof}
For the necessary condition with $\LQ$, assume that $\LQ$ is nontrivial and let $f$ be a nonzero
function belonging to $\LQ$. Hence, for every $\gamma,\gamma'\in\Gamma$ and $x\in\V$, we can write $f(x+\gamma+\gamma')$
in the following forms
\begin{align}
f(x+\gamma+\gamma') = f((x+\gamma)+\gamma') 
=\chi(\gamma')\chi(\gamma)e^{\nu\scal{x+\frac{\gamma+\gamma'}{2},\gamma+\gamma'}}f(x) \label{eq1}
\end{align}
and
\begin{align}
f(x+\gamma+\gamma') = f(x+(\gamma+\gamma'))=\chi(\gamma'+\gamma)e^{\nu\scal{x+\frac{\gamma+\gamma'}{2},\gamma+\gamma'}}f(x). \label{eq2}
\end{align}
Hence the desired result follows by equating the right hand-sides of \eqref{eq1} and
 \eqref{eq2} and using the fact that $f(x_0)\ne 0$ for certain $x_0$.
The necessary condition concerning $\LQc$ can be handled in a similar way with additional consideration. In fact, the $x_0$ is taken in $\V\setminus\widetilde{\mathcal{D}_f}$, where $\widetilde{\mathcal{D}_f}=\cup_{\gamma\in\Gamma}(D_f+\gamma)$ and $D_f$ is a negligible subset of $\V$ such that $f$ is well defined on its complementary $\V\setminus D_f$. Notice that $\widetilde{\mathcal{D}_f}$ is also negligible since $\Gamma$ is a countable set.

The proof of the sufficient condition follows by considering the Poincar\'{e} series
\begin{align}\label{PSeries}
[{\mathcal{P}}_{\Gamma,\chi}\psi](x)=\sum\limits_{\gamma\in\Gamma} \overline{\chi(\gamma)}e^{-\nu\scal{x+\frac{\gamma}{2},\gamma}}\psi(x+\gamma)
\end{align}
related to given $\mathcal{C}^\infty$ complex-valued function $\psi$ with compact support contained in the interior of $\Lambda(\Gamma)$ (such $\psi$ exists
from the classical analysis). The series \eqref{PSeries} is then well defined, since every $x\in\V$ has a unique representation in $\Lambda(\Gamma)$.
 Moreover, ${\mathcal{P}}_{\Gamma,\chi}\psi$ is a nonzero function belonging to $\LQ \cap \LQc$. Indeed, we have
$ [{\mathcal{P}}_{\Gamma,\chi}\psi]=\psi  \ne 0 $ on $\Lambda(\Gamma)$, since $\chi(0)=1$ and $x+\gamma \notin \mbox{Supp}(\psi)$ for all $x\in \Lambda(\Gamma)$. Now, making use of the fact that
$\chi(\gamma"-\gamma) =\chi(\gamma")\overline{\chi(\gamma)},$
for $\chi$ being a character, combined with 
the symmetry of the bilinear form $\scal{\cdot,\cdot}$, we obtain
\begin{align*}
[{\mathcal{P}}_{\Gamma,\chi}\psi](x+\gamma) = \chi(\gamma)e^{\nu\scal{x+\frac{\gamma}{2},\gamma}} [{\mathcal{P}}_{\Gamma,\chi}\psi](x)
\end{align*}
for every $\gamma \in \Gamma$ and $x\in \V$.  This completes the proof.
\end{proof}

\begin{remark}
The proof of "if" in the previous theorem can be reworded if we are able to exhibit an explicit nonzero complex-valued function belonging to $\LQ\cap \LQc$.
This is contained in Lemma \ref{Lem:condidatFct} below.
\end{remark}

\begin{remark}
Notice that $\LQ$ is a prehilbertian space when equipped with the scalar product \eqref{IntrScal1}. Thus, by functional analysis theory it has a completion.
It should be proved later that $\LQc$ is in fact the completion of $\LQ$ (Proposition \ref{prop:completion}).
\end{remark}

Now, we deal with $\LQc$ under the assumption that $\chi$ is a character. The discrete subgroup $\Gamma$ of $(\V,+)$ can be viewed as a $\Z$-module of dimension $r$. Therefore, $\Gamma$ take the following form
$$\Gamma=\Z\omega_1+\cdots +\Z\omega_r$$
for some $\R$-linearly independent vectors $\omega_1,\cdots,\omega_r\in\V$.
A corresponding fundamental domain is proved to be given by
$$
\Lambda(\Gamma) \simeq \V/\Gamma  \simeq  \Lambda_1(\Gamma)\times \WRo,
$$
where $\Lambda_1(\Gamma)$ is a compact fundamental domain of $\Gamma$ in the $r$-dimensional real vector space $\WR=\mbox{Span}_\R(\Gamma)$, constituting of all $\R$-linearly finite combinations of elements of $\Gamma$. By $\WRo$ we denote its complement orthogonal with respect to $\scal{.,.}$. Accordingly, we can split $\V$ as
\begin{equation}\label{splitV}
\V= \WR \oplus \WRo
\end{equation}
and the symmetric bilinear form $\scal{\cdot,\cdot}$ as
\begin{equation}\label{splitQ}
\scal{x,y}= \scal{x_1,y_1}+\scal{x_2,y_2}=Q_1(x_1,y_1)+Q_2(x_2,y_2),
\end{equation}
for every $x=x_1+x_2$ and $y=y_1+y_2$ in $\V$ with $x_1,y_1\in\WR$ and $x_2,y_2\in\WRo$,
where $Q_1$ (resp. $Q_2$) is the restriction of $\scal{.,.}$ to $\WR$ (resp. $\WRo$) and can be viewed as a positive definite symmetric bilinear form on $\WR$ (resp. $\WRo$).

Now in view of \eqref{splitQ}, the functional equation \eqref{FctEq2} reduces further to
\begin{equation}\label{func-eq}
f(x_1+\gamma, x_2) =\chi(\gamma) e^{\nu \scal{x_1+\frac{\gamma}2, \gamma} } f(x_1, x_2)
\end{equation}
with $f(x_1, x_2):= f(x_1+x_2)$ for all $x_1\in\WR$, $x_2\in\WRo$ and $\gamma\in\Gamma$.

\begin{lemma} \label{Lem:condidatFct}
Under the assumption that $\chi$ is a character, we have the following
\begin{itemize}
\item[i)] There exists a vector $v_\chi\in \WR$ (depending in the choice of the basis of $\Gamma$) such that for all $\gamma\in\Gamma$, we have
    $$\chi(\gamma)= e^{2i\pi \scal{v_\chi,\gamma}} .$$ 
\item[ii)] The function
   \begin{equation}\label{groundfct}
		x \longmapsto   \psi_{v_\chi}(x) : = e^{\frac{\nu}{2}\scal{x_1,x_1} + 2i\pi \scal{x_1,v_\chi}}
   \end{equation}
belongs to $\LQ \cap \LQc$.
\end{itemize}
\end{lemma}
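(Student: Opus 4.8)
The plan is to treat the two assertions separately: (i) is a statement purely about the character $\chi$ and the geometry of $\WR$, whereas (ii) is a direct verification that the explicit exponential \eqref{groundfct} obeys the functional equation \eqref{FctEq1} and has finite norm in the sense of \eqref{Intrnorm}.

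For (i), I would first exploit that $\Gamma=\Z\omega_1+\cdots+\Z\omega_r$ is free abelian of rank $r$, so that a character $\chi\colon\Gamma\to\{z\in\C:\abs{z}=1\}$ is completely determined by its values on the generators; write $\chi(\omega_j)=e^{2i\pi\theta_j}$ for suitable real numbers $\theta_1,\dots,\theta_r$. The task reduces to producing $v_\chi\in\WR$ with $\scal{v_\chi,\omega_j}=\theta_j$ for every $j$, because then for a general $\gamma=\sum_j n_j\omega_j\in\Gamma$ the character property yields $e^{2i\pi\scal{v_\chi,\gamma}}=e^{2i\pi\sum_j n_j\theta_j}=\prod_j\chi(\omega_j)^{n_j}=\chi(\gamma)$. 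The existence of such a $v_\chi$ inside $\WR$ is exactly where the Gram matrix $G=(\scal{\omega_i,\omega_j})_{1\le i,j\le r}$ enters: seeking $v_\chi=\sum_i c_i\omega_i$ turns the conditions into the linear system $Gc=\theta$, which is uniquely solvable because $G$ is positive definite (the $\omega_i$ being $\R$-linearly independent), hence invertible; one takes $c=G^{-1}\theta$.

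For the functional equation in (ii), I would substitute $x+\gamma$ into \eqref{groundfct}. Since $\gamma\in\WR$, only the $\WR$-component is shifted, $(x+\gamma)_1=x_1+\gamma$ and $(x+\gamma)_2=x_2$, so expanding $\scal{x_1+\gamma,x_1+\gamma}$ and $\scal{x_1+\gamma,v_\chi}$ and factoring out $\psi_{v_\chi}(x)$ produces precisely the multiplier $e^{2i\pi\scal{v_\chi,\gamma}}\,e^{\nu\scal{x_1+\frac{\gamma}2,\gamma}}$. The first factor equals $\chi(\gamma)$ by part (i) and the symmetry of $\scal{\cdot,\cdot}$, while the second equals $e^{\nu\scal{x+\frac\gamma2,\gamma}}$ because $\scal{x_2,\gamma}=0$ for $x_2\in\WRo$ and $\gamma\in\WR$; this is exactly \eqref{FctEq1} (equivalently \eqref{func-eq}). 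Smoothness is immediate, $\psi_{v_\chi}$ being the exponential of a polynomial.

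Finally, for integrability I would compute $\abs{\psi_{v_\chi}(x)}^2=e^{\nu\scal{x_1,x_1}}$, the phase $2i\pi\scal{x_1,v_\chi}$ being purely imaginary, so that by the orthogonal splitting $\norm{x}^2=\norm{x_1}^2+\norm{x_2}^2$ the weighted integrand collapses to $\abs{\psi_{v_\chi}(x)}^2 e^{-\nu\norm{x}^2}=e^{-\nu\norm{x_2}^2}$. Using $\Lambda(\Gamma)\simeq\Lambda_1(\Gamma)\times\WRo$ then factors $\normqr{\psi_{v_\chi}}^2$ as $vol(\Lambda_1(\Gamma))$ times the Gaussian integral $\int_{\WRo}e^{-\nu\norm{x_2}^2}\,\meswo$, which converges to $(\pi/\nu)^{(d-r)/2}$ for $\nu>0$, establishing $\psi_{v_\chi}\in\LQ\cap\LQc$. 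I expect the only point requiring genuine care to be the construction of $v_\chi$ inside $\WR$ in part (i): it is tempting to take an arbitrary preimage in $\V$, but landing in the span $\WR$ is what forces the Gram-matrix inversion, and this very choice is what makes the multiplier in (ii) reduce correctly. The remaining computations are routine.
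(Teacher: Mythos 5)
Your proof is correct and follows essentially the same route as the paper: reading off the character values $\chi(\omega_j)=e^{2i\pi\alpha_j}$ on the generators, solving the linear system with the Gram matrix $G$ (i.e. $\beta_\chi = G^{-1}\alpha_\chi$) to place $v_\chi$ inside $\WR$, and then a direct expansion of $\psi_{v_\chi}(x+\gamma)$ using bilinearity, symmetry, and the orthogonality $\scal{x_2,\gamma}=0$ to recover the multiplier $\chi(\gamma)e^{\nu\scal{x+\frac{\gamma}{2},\gamma}}$. The only difference is that you explicitly verify the finiteness of $\normqr{\psi_{v_\chi}}$ (needed for membership in $\LQc$), a point the paper leaves implicit; this is a welcome completion rather than a divergence.
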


\begin{proof}	
Since $\chi$ is a $\Gamma$-character and its restriction to any $\Z \omega_j$; $1\leq j\leq r$, is a $\Z$-character, there exists $\alpha_j \in \R$ such that $\chi(\omega_j)=e^{2i\pi\alpha_j}$. Therefore,
$$\chi(\gamma) = \chi(m_1\omega_1+\cdots+m_r\omega_r) = e^{2i\pi(\alpha_1m_1+\cdots+\alpha_rm_r)}.$$
Let $\alpha_\chi$ stands for $\alpha_\chi=(\alpha_1, \cdots ,\alpha_r)\in \R^r$ and $m=(m_1,\cdots,m_r)\in \Z^r$. Then, we can rewrite $\alpha_1m_1+\cdots+\alpha_rm_r$ as
$$\alpha_1m_1+\cdots+\alpha_rm_r =
{^t}\hspace*{-.05cm} m G (G^{-1}\alpha_\chi)=
{^t}\hspace*{-.05cm} m G \beta_\chi = \scal{v_\chi,\gamma},
$$
where $G:= \left(\scal{\omega_i,\omega_j}\right)_{1\leq i,j\leq r}$ is the invertible Gram-Schmidt matrix of $Q_1$ on the vector space $\WR$ and $v_\chi$ is the vector in $\WR$ given by
$v_\chi  = \beta_1\omega_1 + \cdots +\beta_r\omega_r$
with
$\beta_\chi :=(\beta_1, \cdots ,\beta_r) := G^{-1} \alpha_\chi$.
This completes the proof of i).

The proof of ii) can be handled easily using the bilinearity and the symmetry of $\scal{.,.}$, keeping in mind that $\chi(\gamma)= e^{2i\pi \scal{v_\chi,\gamma}}$. Indeed, we have
\begin{align*}
\psi_{v_\chi}(x+\gamma) = e^{\frac{\nu}{2} \scal{x_1+\gamma, x_1+\gamma} + 2i \pi \scal{x_1+\gamma, v_\chi}}
= \chi(\gamma) \psi_{v_\chi}(x) e^{\nu \scal{x + \frac{\gamma}{2}, \gamma}}.
\end{align*}
The last equality follows since $\scal{y_1 , \gamma} =  \scal{y , \gamma}$ for every $y=y_1+y_2 \in \V$ with $(y_1,y_2)\in\WR\times\WRo$ and $\gamma \in \Gamma \subset \WR$.
\end{proof}		

The existence and the explicit expression of $\psi_{v_\chi}$ will play a crucial role in establishing the following result.
			
\begin{theorem}\label{thm-description} Keep notations as above. Then, $\LQc$ is a Hilbert space. Moreover, a
 function $f$ belongs to $\LQc$ if and only if it can be expanded in series as
			\begin{equation}\label{expansion0}
			f(x)=e^{\frac{\nu}2\scal{x_1,x_1}} \sum\limits_{{\gamma^*}\in\Gamma^*} a_{{\gamma^*}}(x_2)
            e^{2i\pi \scal{v_\chi+{\gamma^*},x_1}},
			\end{equation}
      where $\Gamma^* := \{ \gamma^* \in \WR ; \, \scal{\gamma^*, \gamma} \in  \Z; \, \, \gamma\in \Gamma\}$ denotes the dual of $\Gamma$.
       The involved coefficients $a_{{\gamma^*}} (x_2)$; $x_2\in \WRo$, satisfy the growth condition
      	\begin{equation}\label{condphi}
      	\sum\limits_{{\gamma^*}\in\Gamma^*}\| a_{\gamma^*}\|^2_{L^2(\WRo, e^{-\nu \scal{x_2 ,x_2}} \meswo)}<+\infty.
      	\end{equation}
\end{theorem}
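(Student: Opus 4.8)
The plan is to reduce the study of $\LQc$ to ordinary $\Gamma$-periodic Fourier analysis in the $x_1$-variable by dividing out the explicit ground state $\psi_{v_\chi}$ from \eqref{groundfct}. Since $\psi_{v_\chi}$ is smooth and nowhere vanishing and satisfies the same functional equation \eqref{FctEq2} as $f$ (by Lemma \ref{Lem:condidatFct}), the quotient $g := f/\psi_{v_\chi}$ is a well-defined measurable function almost everywhere, and the multiplier factors $\chi(\gamma)e^{\nu\scal{x+\frac{\gamma}{2},\gamma}}$ cancel, so that $g(x+\gamma)=g(x)$ for every $\gamma\in\Gamma$. Thus $g$ is $\Gamma$-periodic and, because $\Gamma$ spans $\WR$, it depends $\Gamma$-periodically on $x_1\in\WR$ with no constraint in $x_2\in\WRo$. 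A direct computation using $|\psi_{v_\chi}(x)|^2=e^{\nu\scal{x_1,x_1}}$ and the splitting \eqref{splitQ} gives the pointwise identity $|f(x)|^2e^{-\nu\norm{x}^2}=|g(x)|^2e^{-\nu\scal{x_2,x_2}}$, which is the computational engine of the whole argument.

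Next I would expand $g$ in a Fourier series. Writing $\Lambda(\Gamma)\simeq\Lambda_1(\Gamma)\times\WRo$ and applying Tonelli to the above identity shows that $f\in\LQc$ forces $g(\cdot,x_2)\in L^2(\Lambda_1(\Gamma))$ for almost every $x_2$. The characters $x_1\mapsto e^{2i\pi\scal{\gamma^*,x_1}}$, $\gamma^*\in\Gamma^*$, form an orthogonal basis of $L^2(\Lambda_1(\Gamma))$, so $g(x_1,x_2)=\sum_{\gamma^*\in\Gamma^*}a_{\gamma^*}(x_2)e^{2i\pi\scal{\gamma^*,x_1}}$ with $a_{\gamma^*}(x_2)$ the corresponding Fourier coefficients; multiplying back by $\psi_{v_\chi}$ and absorbing the factor $e^{2i\pi\scal{x_1,v_\chi}}$ produces exactly the claimed expansion \eqref{expansion0}. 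Substituting this expansion into $\normqr{f}^2$ and using Fubini together with Parseval's identity for the orthogonal system $\{e^{2i\pi\scal{v_\chi+\gamma^*,x_1}}\}$ on $\Lambda_1(\Gamma)$ yields the norm formula $\normqr{f}^2=vol(\Lambda_1(\Gamma))\sum_{\gamma^*\in\Gamma^*}\|a_{\gamma^*}\|^2_{L^2(\WRo,e^{-\nu\scal{x_2,x_2}}\meswo)}$.

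This single identity delivers both the characterization and the completeness. For the "only if" direction it shows that any $f\in\LQc$ admits the expansion \eqref{expansion0} with coefficients obeying the growth condition \eqref{condphi}. Conversely, given coefficients $(a_{\gamma^*})$ satisfying \eqref{condphi}, the same formula applied to the partial sums of \eqref{expansion0} shows they are $\normqr{\cdot}$-Cauchy; their limit $f$ satisfies \eqref{FctEq2} (each summand does, through the transformation behaviour of $\psi_{v_\chi}$, and the functional equation passes to the $L^2$-limit) and has finite norm, hence $f\in\LQc$. Finally, the correspondence $f\mapsto(a_{\gamma^*})_{\gamma^*}$ is then a linear bijection which, up to the constant $\sqrt{vol(\Lambda_1(\Gamma))}$, is isometric onto the Hilbert space $\ell^2\big(\Gamma^*,L^2(\WRo,e^{-\nu\scal{x_2,x_2}}\meswo)\big)$; since the target is complete, $\LQc$ is complete and therefore a Hilbert space for the scalar product \eqref{IntrScal1}.

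I expect the main obstacle to be the measure-theoretic bookkeeping rather than any deep idea: one must justify that $g$ lies in $L^2_{loc}$ and that for almost every $x_2$ its restriction is a genuine $L^2(\Lambda_1(\Gamma))$-function, so that the Fourier expansion is legitimate, and one must handle the Fubini/Tonelli interchange together with the a.e. validity of the functional equation (in particular, that the exceptional null set may be taken $\Gamma$-invariant, as already observed in the proof of the previous proposition). Verifying that the functional equation survives passage to the $L^2$-limit in the converse direction, and clarifying in what sense the series converges to the stated $f$, are the points that will require the most care.
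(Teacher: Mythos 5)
Your proposal is correct and, for the core of the theorem, follows the same route as the paper: the paper also divides out the ground state (it sets $\varphi(x)=e^{-\frac{\nu}{2}\scal{x_1,x_1}-2i\pi\scal{v_\chi,x_1}}f(x)$, which is exactly your $g=f/\psi_{v_\chi}$), observes that this quotient is $\Gamma$-periodic in the $x_1$-direction, expands it for almost every fixed $x_2$ in the Fourier series over the dual-lattice characters $e^{2i\pi\scal{\gamma^*,x_1}}$, and then derives the growth condition \eqref{condphi} from Fubini plus Parseval on $L^2(\Lambda_1(\Gamma);\mesw)$, arriving at precisely your identity $\normqr{f}^2=vol(\Lambda_1(\Gamma))\sum_{\gamma^*\in\Gamma^*}\|a_{\gamma^*}\|^2_{L^2(\WRo,e^{-\nu\scal{x_2,x_2}}\meswo)}$. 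Where you genuinely diverge is the Hilbert-space assertion: the paper proves completeness first and directly, by restricting a Cauchy sequence to a fundamental domain, taking the limit in $L^2(\Lambda(\Gamma);e^{-\nu\norm{\xi}^2}d\lambda)$, and extending it back to $\V$ through the functional equation; you instead deduce completeness a posteriori from the norm identity, via the (constant multiple of an) isometric bijection $f\mapsto(a_{\gamma^*})_{\gamma^*}$ onto the complete space $\ell^2\big(\Gamma^*;L^2(\WRo,e^{-\nu\scal{x_2,x_2}}\meswo)\big)$. Your variant buys two things: it makes the ``if'' direction of the equivalence explicit (the paper never spells out why coefficients satisfying \eqref{condphi} yield an element of $\LQc$ --- your Cauchy-partial-sums argument supplies this), and it folds completeness and surjectivity into a single identification; the price is exactly the measure-theoretic care you flag at the end (a $\Gamma$-invariant exceptional null set, and the survival of \eqref{FctEq2} under the $L^2$-limit), points which the paper's direct argument also relies on but leaves equally implicit.
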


      \begin{proof} The first assertion is obvious. Indeed, for any Cauchy sequence $(f_n)_n$ in $\LQc$, the $({f_n}_{|_{\Lambda(\Gamma)}})_n$ is a Cauchy sequence in the Hilbert space $L^2(\Lambda(\Gamma);e^{-\norm{\xi}^2}d\lambda)$ and hence converges to some function $f$ defined on $\Lambda(\Gamma)$. Since $\Lambda(\Gamma)$ is an arbitrary fundamental domain, it follows that $f$ is defined on $\V$ and satisfies \eqref{FctEq2}.
For the second assertion, let $f\in \LQc$. Then, the function
$$\varphi(x) := e^{-\frac{\nu}{2}\scal{x_1,x_1}-2i\pi \scal{v_\chi,x_1}}f(x)  $$
  is a $\Gamma$-periodic function on $\V$ in the $x_1$-direction. Furthermore, by means of Fubini's theorem, we get
\begin{align}\label{Fubinin}
\normqr{f}^2  &= \int_{\WRo}
\left(  \int_{\Lambda_1(\Gamma)} \left|\varphi(x_1,x_2)\right|^2 \mesw\right)e^{-\nu \scal{x_2,x_2}}\meswo<+\infty.
\end{align}
Subsequently,
$$\int_{\Lambda_1(\Gamma)} \left|\varphi(x_1,x_2)\right|^2 \mesw<+\infty$$
almost everywhere on $\WRo$, and the function $x_1\mapsto\varphi(x_1,x_2)$ can be expanded as
\begin{equation}\label{func}
\varphi(x_1,x_2) = \sum\limits_{{\gamma^*}\in\Gamma^*}a_{\gamma^*}(x_2) e^{2\pi i\scal{x_1,{\gamma^*}}},
\end{equation}
where the series converges absolutely and uniformly on $\Lambda_1(\Gamma)$, for almost every fixed $x_2\in\WRo$. Here $a_{\gamma^*}(x_2)$ are the Fourier coefficients given by (\cite[p.44]{Igusa1972}):
	$$a_{\gamma^*}(x_2)=\frac{1}{vol(\Lambda_1(\Gamma))}\int_{\Lambda_1(\Gamma)} \varphi(x_1,x_2)e^{-2\pi i\scal{x_1,{\gamma^*}}} \mesw.$$
To prove the growth condition \eqref{condphi}, we start from \eqref{expansion0} and make use again of the Fubini theorem. This entails
\begin{align*}
\normqr{f}^2 &= \int_{\Lambda(\Gamma)} \left| e^{\frac{\nu}2 \scal{x_1,x_1}} \sum\limits_{{\gamma^*}\in\Gamma^*} a_{{\gamma^*}}(x_2)
			e^{2i\pi \scal{v_\chi+{\gamma^*},x_1}} \right|^2 e^{-\nu \scal{x,x}}\mes\\
			&= \int_{\WRo} 	e^{ -\nu \scal{x_2,x_2}} \left( \int_{\Lambda_1(\Gamma)}  \left| \sum\limits_{{\gamma^*}\in\Gamma^*} a_{{\gamma^*}}(x_2)
			e^{2i\pi \scal{v_\chi+{\gamma^*},x_1}} \right|^2 \mesw \right) \meswo	.	
\end{align*}
Next, by Parseval's identity in the Hilbert space $L^2(\Lambda_1(\Gamma);\mesw)$, we get
			\begin{align*}
			\int_{\Lambda_1(\Gamma)}  \left| \sum\limits_{{\gamma^*}\in\Gamma^*} a_{{\gamma^*}}(x_2)
			e^{2i\pi \scal{{\gamma^*},x_1}} \right|^2 \mesw
			= vol(\Lambda_1(\Gamma))	\sum\limits_{\gamma^* \in\Gamma^*} \left|a_{\gamma^*}(x_2)\right|^2,	
			\end{align*}
for almost every $x_2\in\WRo$. Therefore, the square norm $\normqr{f}^2$ reduces to
			\begin{align}
			\normqr{f}^2  
			&= vol(\Lambda_1(\Gamma)) \sum\limits_{\gamma^* \in\Gamma^*} \int_{\WRo} 	e^{ -\nu \scal{x_2,x_2}}
			\left|a_{\gamma^*}(x_2)\right|^2 \meswo	\nonumber
\\& = vol(\Lambda_1(\Gamma)) \sum\limits_{{\gamma^*}\in\Gamma^*}\| a_{\gamma^*}\|^2_{L^2(\WRo, e^{-\nu \scal{x_2 ,x_2}} \meswo)}. \label{normx1}	
			\end{align}
			This completes the proof.
		\end{proof}

To the exact statement of the main result of this section, notice that the subspace $\WRo$  is generated by some $\R$-linearly independent vectors, $\omega_{r+1}, \cdots ,\omega_d \in \V$.
Without lost of generality, we can assume that the $\omega_{r+1}, \cdots ,\omega_d$ are orthonormal with respect to the form $Q_2$; the restriction of the usual scalar product $\scal{.,.}$ on $\V$ to $\WRo$ (see for example \cite{thangavelu}).
That is
$$
Q_2(\omega_j, \omega_k)= \delta_{jk}; \quad j,k=r+1,\cdots,d.
$$ 
Identifying  $x_2=x_{r+1}\omega_{r+1}+\cdots+x_{d}\omega_{d}$ in $\WRo$ to its coordinates  $(x_{r+1},\cdots,x_{d})$ in $\R^{d-r}$. Then,
an orthogonal basis of $L^2\left(\WRo;e^{-\nu\norm{x_2}^2}\meswo\right)$ is
  \begin{equation}
  \mathbf{H}^\nu_{\mathbf{k}}(x_2)=\mathbf{H}_{\mathbf{k}}(\sqrt{\nu}x_2);\qquad\mathbf{k}\in (\Z^+)^{d-r},
  \end{equation}
where $\mathbf{H}_{\mathbf{k}}$ denotes the $(d-r)$-dimensional Hermite polynomials
		\begin{equation} \label{hermite}
		\mathbf{H}_{\mathbf{k}}(\xi)  = (-1)^{|\mathbf{k}|} \, e^{\norm{\xi}^2} \frac{\partial^{\mathbf{k}}} {\partial \xi_{r+1}^{k_{r+1}}  \cdots \partial \xi_{d}^{k_{d}} } \left( e^{-\norm{\xi}^2}\right);  
 \end{equation}
 with $\mathbf{k}= (k_{r+1},k_{r+2},\cdots,k_{d})\in(\Z^+)^{d-r}$, $|\mathbf{k}|= k_{r+1}+k_{r+2}+\cdots+k_{d}$ and $\xi=(\xi_{r+1},\xi_{r+2},\cdots,\xi_{d})\in\R^{d-r}$.
Therefore, by means of \eqref{condphi}, the Fourier coefficient $x_2\mapsto a_{\gamma^*}(x_2)$ belongs to the Hilbert space $L^2(\WRo; e^{-\nu\norm{x_2}^2} \meswo)$, and it can be expanded as
\begin{equation}\label{Coef:expansion}
a_{\gamma^*}(x_2)=\sum\limits_{\mathbf{k}\in (\Z^+)^{d-r}} a_{{\gamma^*},\mathbf{k}} \mathbf{H}^\nu_{\mathbf{k}}(x_2),
\end{equation}
for some complex numbers $a_{{\gamma^*},\mathbf{k}}$.
		Thus, we assert the following
	\begin{theorem}  \label{ThmbasisO}
A complex-valued function $f$ belongs to the Hilbert space $\LQc$ if and only if it can be expanded as
\begin{equation}\label{thm:expansion}
f(x):=f(x_1,x_2)=\sum\limits_{{\gamma^*}\in\Gamma^*,\mathbf{k}\in (\Z^+)^{d-r}}a_{{\gamma^*},\mathbf{k}} e^{\frac{\nu}2 \scal{x_1,x_1} + 2\pi i \scal{v_\chi+{\gamma^*},x_1} }
\mathbf{H}^\nu_{\mathbf{k}}(x_2)
		\end{equation}
for almost every $(x_1,x_2)\in\WR\times\WRo$, with
\begin{align*}\label{thm:norm}
			\normqr{f}^2  =vol(\Lambda_1(\Gamma)) \left(\frac{\pi}{\nu}\right)^{(d-r)/2} \sum\limits_{\gamma^*\in\Gamma^*,\mathbf{k}\in (\Z^+)^{d-r}} 2^{|\mathbf{k}|}\mathbf{k}!\left| a_{{\gamma^*},\mathbf{k}}\right|^2 <+\infty.
			\end{align*}
\end{theorem}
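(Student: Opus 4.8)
The plan is to derive this directly from Theorem \ref{thm-description} together with the completeness of the scaled Hermite system recorded just above in \eqref{Coef:expansion}. Indeed, Theorem \ref{thm-description} already provides, for each $f\in\LQc$, the $x_1$-expansion \eqref{expansion0} whose coefficients $a_{\gamma^*}(x_2)$ satisfy the growth condition \eqref{condphi}, as well as the matching norm identity \eqref{normx1}. The only remaining task is to expand each Fourier coefficient in the orthogonal Hermite basis and to reassemble the pieces, so the proof is essentially a second, transverse, Parseval computation layered on top of the first.

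First I would recall that the growth condition \eqref{condphi} forces each $a_{\gamma^*}$ to lie in $L^2(\WRo;e^{-\nu\norm{x_2}^2}\meswo)$, so that it admits the expansion $a_{\gamma^*}(x_2)=\sum_{\mathbf{k}} a_{\gamma^*,\mathbf{k}}\mathbf{H}^\nu_{\mathbf{k}}(x_2)$ of \eqref{Coef:expansion}. Substituting this into \eqref{expansion0} and regrouping the exponential and Hermite factors yields precisely the double series \eqref{thm:expansion}, whose building blocks are the functions $e_{\gamma^*,\mathbf{k}}$. Conversely, starting from a doubly-indexed family $(a_{\gamma^*,\mathbf{k}})$ for which the right-hand side norm is finite, the same computation read backwards reconstructs an element of $\LQc$; hence both implications are covered by one orthogonality argument.

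For the norm formula I would proceed in two stages. Applying Parseval's identity in $L^2(\WRo;e^{-\nu\norm{x_2}^2}\meswo)$ to the Hermite expansion of each $a_{\gamma^*}$ gives $\|a_{\gamma^*}\|^2 = \sum_{\mathbf{k}} |a_{\gamma^*,\mathbf{k}}|^2\,\|\mathbf{H}^\nu_{\mathbf{k}}\|^2$. The remaining ingredient is the explicit value of the Hermite square norm: a change of variable $x_2\mapsto\sqrt{\nu}\,x_2$ together with the classical normalization $\int_{\R} H_k^2(t)e^{-t^2}\,dt = 2^k k!\sqrt{\pi}$, taken over each of the $d-r$ coordinates, produces $\|\mathbf{H}^\nu_{\mathbf{k}}\|^2 = (\pi/\nu)^{(d-r)/2}\,2^{|\mathbf{k}|}\mathbf{k}!$. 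Inserting this into \eqref{normx1} and summing over $\gamma^*\in\Gamma^*$ gives the asserted expression for $\normqr{f}^2$; specializing to a single nonzero coefficient recovers the value of $\normqr{e_{\gamma^*,\mathbf{k}}}^2$ stated in Theorem \ref{mainresult1}.

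Conceptually the whole argument is a tensor-product factorization: the correspondence $f\mapsto e^{-\frac{\nu}{2}\scal{x_1,x_1}-2i\pi\scal{v_\chi,x_1}}f$ identifies $\LQc$, up to the constant $vol(\Lambda_1(\Gamma))$, with $L^2(\Lambda_1(\Gamma);\mesw)\otimes L^2(\WRo;e^{-\nu\norm{x_2}^2}\meswo)$, and $\{e_{\gamma^*,\mathbf{k}}\}$ is the image of the product of the exponential basis in the first factor and the Hermite basis in the second. I do not anticipate a genuine obstacle: everything reduces to orthogonality and completeness already established, and the only mild point requiring care is the legitimacy of substituting one $L^2$-expansion termwise into another. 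This is exactly what the tensor-product viewpoint guarantees, since an orthogonal basis of a Hilbert tensor product is the product of orthogonal bases of its factors, which also secures the $L^2$-convergence of the double series \eqref{thm:expansion} in the norm $\normqr{\cdot}$.
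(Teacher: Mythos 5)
Your proposal is correct and follows essentially the same route as the paper: it invokes Theorem \ref{thm-description}, expands each Fourier coefficient $a_{\gamma^*}$ in the Hermite basis via \eqref{Coef:expansion}, computes $\left\Vert \mathbf{H}^\nu_{\mathbf{k}}\right\Vert^2=\left(\pi/\nu\right)^{(d-r)/2}2^{|\mathbf{k}|}\mathbf{k}!$ by the change of variable $\xi=\sqrt{\nu}\,x_2$, and inserts the result into \eqref{normx1}. The tensor-product framing you add is a clean way of justifying the termwise substitution, but it is the same Parseval-on-top-of-Parseval argument the paper gives.
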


\begin{remark}
The series in \eqref{thm:expansion} converges in the Hilbert space $\LQc$.
\end{remark}

\begin{proof}
The result is a consequence of Theorem \ref{thm-description} and the fact that the Fourier coefficients $a_{\gamma^*}(x_2)$ are given by  \eqref{Coef:expansion}. More exactly, using the orthogonality of the Hermite polynomials in $L^2(\R^{d-r},e^{-\|\xi\|^2}d\lambda(\xi))$, we get
\begin{align*}
\|a_{\gamma^*}\|^2_{L^2(\WRo, e^{-\nu\scal{x_2,x_2}} \mess)}
&= \int_{\WRo} 	e^{ -\nu \scal{x_2,x_2}} \left|a_{\gamma^*}(x_2)\right|^2 \meswo\\
           \\& = \frac{1}{\nu^{(d-r)/2}} \sum\limits_{\mathbf{k}\in (\Z^+)^{d-r}} \left| a_{{\gamma^*},\mathbf{k}}\right|^2 \int_{\R^{d-r}} 	
           \left| \mathbf{H}_{\mathbf{k}}(\xi)\right|^2  e^{ -\norm{\xi}^2}d\lambda(\xi)
\\& =\left(\frac{\pi}{\nu}\right)^{(d-r)/2}  \sum\limits_{\mathbf{k}\in (\Z^+)^{d-r}} 2^{|\mathbf{k}|}\mathbf{k}!\left| a_{{\gamma^*},\mathbf{k}}\right|^2 .
\end{align*}	
\end{proof}

Now, we are able to give a proof of Theorem \ref{mainresult1} saying that the family of functions
		\begin{equation}\label{basis}
		e_{{\gamma^*},\mathbf{k}}(x) = e^{\frac{\nu}2 \scal{x_1,x_1}+ 2\pi i \scal{v_\chi+{\gamma^*},x_1} } \mathbf{H}^\nu_{\mathbf{k}}(x_2),
		\end{equation}
for ${\gamma^*}\in\Gamma^*$ and  $\mathbf{k}=(k_{1},  \cdots ,  k_{d-r})\in(\Z^+)^{d-r}$, constitutes an orthogonal basis of $\LQc$
with \begin{equation}\label{norm:e}			
\normqr{ e_{{\gamma^*},\mathbf{k}}}^2=vol(\Lambda_1(\Gamma))\left(\frac{\pi}{\nu}\right)^{(d-r)/2}	 2^{|\mathbf{k}|}\mathbf{k}!.
			\end{equation}

\begin{proof}[Proof of Theorem \ref{mainresult1}]	
The orthogonality of the functions $e_{{\gamma^*},\mathbf{k}}$; for $\gamma^*\in\Gamma^*$,
$\mathbf{k}=(k_{1},  \cdots ,  k_{d-r})\in(\Z^+)^{d-r}$, follows from the orthogonality of the Hermite polynomials in $L^2\left(\R^{d-r},e^{-\norm{\xi}^2}d\lambda\right)$ and the use of the following well-established identity
\begin{align}
\int_{\Lambda_1(\Gamma)}e^{2\pi i \scal{x_1,{\gamma^*}_1-{\gamma^*}_2}} \mesw \nonumber
&= vol(\Lambda_1(\Gamma))\prod_{j=1}^{r} \left(\int_0^1 e^{2\pi i t_j\scal{\omega_j,{\gamma^*}_1-{\gamma^*}_2}} dt_j\right)
=vol(\Lambda_1(\Gamma)) \, \delta_{\gamma^*_1,\gamma^*_2}  \label{int-orthog}
\end{align}
for $x_1=t_1\omega_1+ \cdots +t_{r}\omega_{r}\in \Lambda_1(\Gamma)$
with $t_j \in[0,1]$. In fact, we obtain
\begin{align*}
	\scalqr{e_{{\gamma^*},\mathbf{k}},e_{{\gamma^{*'}},\mathbf{k}'}}
	& = \left(\int_{\Lambda_1(\Gamma)}e^{2\pi i \scal{x_1,{\gamma^*}_1-{\gamma^*}_2}} \mesw\right)
   \times \left(\int_{\WRo} \mathbf{H}^\nu_{\mathbf{k}}(x_2) \overline{\mathbf{H}^\nu_{\mathbf{k'}} (x_2)} e^{-\nu \scal{x_2, x_2}} \meswo\right)
  \\& =\left(\frac{\pi}{\nu}\right)^{(d-r)/2} vol(\Lambda_1(\Gamma))2^{|\mathbf{k}|}\mathbf{k}!\delta_{{\gamma^*},{\gamma^{*'}}} \delta_{\mathbf{k},\mathbf{k}'}.
\end{align*}
To conclude, we need to prove completeness. By the uniqueness of the Fourier coefficients in the obtained expansion
\begin{equation*}
f(x_1,x_2)=\sum\limits_{{\gamma^*}\in\Gamma^*,\mathbf{k}\in (\Z^+)^{d-r}}a_{{\gamma^*},\mathbf{k}}  e^{\frac{\nu}2\scal{x_1,x_1}+2\pi i\scal{x_1,v_\chi+{\gamma^*}}}\mathbf{H}^\nu_{\mathbf{k}}(x_2)
\end{equation*}
for given $f \in \LQc$, and the completeness of the Hermite polynomials $(\mathbf{H}_{\mathbf{k}})_{\mathbf{k}}$ in the Hilbert space $L^2(\R^{d-r}; e^{-\norm{\xi}^2}d\lambda)$, we conclude that $a_{{\gamma^*},\mathbf{k}}=0$ for every ${\gamma^*}\in\Gamma^*$ and $\mathbf{k}\in(\Z^+)^{d-r}$, whenever $\scalqr{f,e_{{\gamma^*},\mathbf{k}}} =0$ for every ${\gamma^*}\in\Gamma^*$ and $\mathbf{k}\in(\Z^+)^{d-r}$.
This implies $f=0$ and the proof is completed.
		\end{proof}
		
We conclude this section by determining the completion of $\LQ$ with respect to the scalar product \eqref{IntrScal1}.
Namely, we assert

\begin{proposition}\label{prop:completion}
	The completion of $(\LQ;\normqr{.})$ coincides with the Hilbert space $\LQc$.
\end{proposition}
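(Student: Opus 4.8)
The plan is to realize the abstract completion of $(\LQ,\normqr{\cdot})$ concretely as the closure of $\LQ$ inside the Hilbert space $\LQc$, and then to show that this closure is all of $\LQc$. The guiding principle is the standard functional-analytic fact that if a pre-Hilbert space embeds isometrically as a dense subspace of a complete space, then that complete space is isometrically isomorphic to its completion. Since $\LQc$ is already known to be a Hilbert space by Theorem \ref{thm-description}, it suffices to verify the embedding and the density.

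First I would check that the inclusion $\LQ \hookrightarrow \LQc$, sending a smooth likewise theta function to its almost-everywhere class, is a well-defined isometry. Every $f\in\LQ$ satisfies the functional equation \eqref{FctEq1} pointwise, hence a fortiori almost everywhere, so it meets the defining condition \eqref{FctEq2}; and the inner product \eqref{IntrScal1} used on $\LQ$ is by construction the restriction of the one on $\LQc$, so the map preserves $\normqr{\cdot}$. Injectivity follows from continuity: if $\normqr{f}=0$ then $f$ vanishes almost everywhere on the fundamental domain $\Lambda(\Gamma)$, hence identically there by continuity, hence on all of $\V$ by the functional equation.

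The heart of the argument is density, and here I would exploit the explicit orthogonal basis produced in Theorem \ref{mainresult1}. Each basis function
\[
e_{\gamma^*,\mathbf{k}}(x)=e^{\frac{\nu}2\scal{x_1,x_1}+2\pi i\scal{v_\chi+\gamma^*,x_1}}\mathbf{H}^\nu_{\mathbf{k}}(x_2)
\]
is manifestly $\mathcal{C}^\infty$ on $\V$, being a product of an entire exponential in $x_1$ with a polynomial in $x_2$. I would then verify that it satisfies \eqref{FctEq1} everywhere, so that $e_{\gamma^*,\mathbf{k}}\in\LQ$: under $x_1\mapsto x_1+\gamma$ the $x_2$-factor is untouched (as $\gamma\in\Gamma\subset\WR$), while the $x_1$-exponential picks up the factor $e^{\nu\scal{x_1+\gamma/2,\gamma}}e^{2\pi i\scal{v_\chi,\gamma}}e^{2\pi i\scal{\gamma^*,\gamma}}$; since $\scal{\gamma^*,\gamma}\in\Z$ the last factor is $1$, and $e^{2\pi i\scal{v_\chi,\gamma}}=\chi(\gamma)$ by Lemma \ref{Lem:condidatFct}, reproducing exactly the automorphy factor in \eqref{FctEq1}. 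Consequently every finite linear combination of the $e_{\gamma^*,\mathbf{k}}$ lies in $\LQ$, and such combinations are dense in $\LQc$ precisely because $\{e_{\gamma^*,\mathbf{k}}\}$ is a complete orthogonal system there. Thus $\LQ$ is a dense subspace of $\LQc$, and combining this with the isometric embedding of the previous step identifies $\LQc$ with the completion of $\LQ$.

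The one point deserving care, and the main (if modest) obstacle, is the pointwise verification that the generating family sits inside $\LQ$ and not merely in $\LQc$, that is, that each $e_{\gamma^*,\mathbf{k}}$ is genuinely smooth and satisfies the functional equation at every point. Once the integrality $\scal{\gamma^*,\gamma}\in\Z$ and the character representation of $\chi$ are invoked, this is the same computation as in the proof of part (ii) of Lemma \ref{Lem:condidatFct}, so no new difficulty arises; the remaining steps are formal consequences of the uniqueness of the completion.
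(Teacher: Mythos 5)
Your proof is correct, but it takes a genuinely different route from the paper's. The paper argues by mollification on a fundamental domain: given $f\in\LQc$, it approximates the restriction $f_{|_{\Lambda(\Gamma)}}$ in $L^2(\Lambda(\Gamma);e^{-\nu\norm{x}^2}d\lambda)$ by $\mathcal{C}^\infty$ functions supported in the interior of $\Lambda(\Gamma)$, and then extends each approximant to all of $\V$ by imposing the functional equation \eqref{FctEq1} (the same disjoint-support periodization as in the Poincar\'e series \eqref{PSeries}); these extensions lie in $\LQ$ and converge to $f$. You instead use the explicit orthogonal basis of Theorem \ref{mainresult1}: each $e_{\gamma^*,\mathbf{k}}$ is smooth, satisfies \eqref{FctEq1} pointwise thanks to $\scal{\gamma^*,\gamma}\in\Z$ and $\chi(\gamma)=e^{2i\pi\scal{v_\chi,\gamma}}$, and has finite norm, so finite spans lie in $\LQ\cap\LQc$ and are dense by completeness of the system; there is no circularity since the proposition follows Theorem \ref{mainresult1}. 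Your route is more concrete and avoids the paper's loosely worded extension step (``since $\Lambda(\Gamma)$ is arbitrary, it follows that $\phi_n$ is defined on the whole $\V$\,''), which really requires spelling out the periodization; the paper's route is independent of having constructed a basis and would survive in settings where no explicit orthogonal system is available. One caveat you share with the paper: as defined, $\LQ$ carries no integrability requirement, so for $r<d$ it contains smooth solutions of \eqref{FctEq1} with infinite norm (e.g.\ $\psi_{v_\chi}$ multiplied by $e^{\nu\norm{x_2}^2}$); both proofs implicitly read $\LQ$ as its finite-norm part $\LQ\cap\LQc$, on which alone $\normqr{\cdot}$ is a norm and the completion statement makes sense.
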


\begin{proof} The completion of $(\LQ;\normqr{.})$ is clearly contained in the  Hilbert space $\LQc$.
For the converse, notice that the function $f_{|_{\Lambda(\Gamma)}}$ belongs to $L^2(\Lambda(\Gamma);e^{-\nu\norm{x}^2}d\lambda)$ whenever $f\in\LQc$. Thus, there exists a sequence $(\phi_n)_n$  of $\mathcal{C}^\infty$ functions with support included in the interior of  $\Lambda(\Gamma)$. Since $\Lambda(\Gamma)$ is arbitrary, it follows that $\phi_n$ is defined on the whole $\V$ and furthermore satisfying \eqref{FctEq1}. Hence $(\phi_n)_n$ is a Cauchy sequence belonging to $(\LQ;\normqr{.})$ and converges to $f$. This shows that $f$ belongs to the completion of $\LQ$.
\end{proof}

\section{On theta Bargmann-Fock space $\Fnugr(\Vc)$}
	In this section, we review briefly some needed properties of the $(\Gamma,\chi)$-theta Bargmann-Fock space, i.e., the Hilbert $\Fnugr(\Vc)$ of the $(L^2,\Gamma,\chi)$-holomorphic theta functions on $\C^d$.  Here, we restrict ourself to the special case of $\Gamma$ being a discrete subgroup of rank $r$ in $(\V,+)$ that can be viewed as a discrete subgroup in $(\Vc,+$). The general case of $\Gamma$ being an arbitrary isotropic discrete subgroup is considered and discussed in \cite{GhInSou2016}. In fact, we regard $\Vc$ as the complexify of $\V$,  $\Vc=\V+i\V$, that we endow with the natural extension of the bilinear symmetric form $\scal{\cdot,\cdot}$ on $\V$, to wit
	 \begin{equation}
	\scal{z,w} = u_1 v_1+ u_2 v_2 + \cdots + u_d v_d ,
	\end{equation}
for $z=(u_1,u_2,\cdots,u_d)$ and $w=(v_1,v_2,\cdots,v_d)$ in $\Vc$,  as well as the standard hermitian scalar product
	 \begin{equation}
	H(z,w) = \scal{z,\overline{w} }.
	\end{equation}
Accordingly, the decomposition $\V= \WR + \WRo$ can be extended naturally to
		 \begin{equation}
		\label{splitcomp}
		\Vc=\WC\oplus\WCo,
		\end{equation}
where $\WC$ and $\WCo$ are the complex subspaces $\WC=\WR+i\WR$ and $\WCo=\WRo+i\WRo$.
Hence, the considered discrete rank $r$ subgroup $\Gamma$ is an isotropic subgroup of $(\Vc,+)$,
in the sense that $\Im m(H(\gamma, \gamma'))=0$ for all $\gamma,\gamma'\in\Gamma$.

Within the above notations,  $\Fnugr(\Vc)$ is the space of all holomorphic functions on $\C^d$ displaying the functional equation
	\begin{equation}\label{FctEq3}
	f(z+\gamma) = \chi(\gamma)  e^{ \nu H(z+\frac{\gamma}2, \gamma) } f(z);\quad z\in \Vc, \, \gamma\in\Gamma,
	\end{equation}
and such that the square norm
	\begin{equation}\label{IntrNorm2}
	\normnu{f}^2 := \int_{\tilde{\Lambda}(\Gamma)} |f(z)|^2 e^{-\nu H(z, z)} \mesc,
	\end{equation}
is finite. Here $\tilde{\Lambda}(\Gamma)$ is a fundamental domain of $\Gamma$ in $\Vc=\R^{2d}$. 
The associated hermitian inner scaler product is
	\begin{equation}\label{IntrScal2}
	\scalnue{f,g} := \int_{\tilde{\Lambda}(\Gamma)} f(z) \overline{g(z)}  e^{-\nu H(z,z)} \mesc.
	\end{equation}

The needed properties related to $\Fnugr(\Vc)$ are summarized in the following

	\begin{theorem}\label{descriptionfock}
	 The space $\Fnugr(\Vc)$ is nontrivial if and only if $\chi$ is a character. In this case, the $(\Gamma, \chi)$-theta Bargmann-Fock space $\Fnugr(\Vc)$ is a reproducing kernel Hilbert space. Moreover, the set of functions
	  \begin{equation}\label{basisthetafock}
		\varphi_{\gamma^*,\mathbf{k}}(z_1,z_2)=e^{ \frac{\nu}{2}\scal{z_1,z_1}+2\pi i\scal{z_1,\gamma^*+v_\chi}}(z_2)^{\mathbf{k}};\qquad z_1\in\WC,\, z_2\in\WCo,
		\end{equation}
		for varying $\gamma^*\in\Gamma^*$ and multi-index  $\mathbf{k}\in (\Z^+)^{d-r}$,
		constitutes an orthogonal basis of $\Fnugr(\Vc)$ with
		\begin{align}\label{norm-basiss}
		\normnu{\varphi_{\gamma^*,\mathbf{k}}}^2 =\left(\frac{\pi}{\nu}\right)^{d-\frac{r}{2}} \frac{vol(\Lambda_1(\Gamma))}{2^{r/2}}\frac{k!}{\nu^{|\mathbf{k}|}} e^{2\frac{\pi^2}{\nu}\scal{\gamma^*+v_\chi,\gamma^*+v_\chi}}.
		\end{align}
	\end{theorem}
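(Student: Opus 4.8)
The plan is to transplant the argument for Theorem \ref{mainresult1} from the real Gaussian setting to the holomorphic one, using the nowhere–vanishing entire multiplier $\psi_{v_\chi}$ of \eqref{groundfct} as a global trivialization. The equivalence ``$\Fnugr(\Vc)$ nontrivial $\iff$ $\chi$ is a character'' is obtained exactly as in the Proposition of Section~3: necessity comes from writing $f((z+\gamma)+\gamma')=f(z+(\gamma+\gamma'))$ in two ways and comparing the resulting cocycles at a point where $f\ne 0$, while for sufficiency one checks that $\psi_{v_\chi}(z)=e^{\frac{\nu}{2}\scal{z_1,z_1}+2i\pi\scal{z_1,v_\chi}}$, now read as an entire function of $z=z_1+z_2\in\WC\oplus\WCo$, satisfies \eqref{FctEq3}. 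Here one uses that $H(z+\tfrac{\gamma}2,\gamma)=\scal{z_1+\tfrac{\gamma}2,\gamma}$ for real $\gamma\in\Gamma\subset\WR$ together with $\chi(\gamma)=e^{2i\pi\scal{v_\chi,\gamma}}$ from Lemma \ref{Lem:condidatFct}; finiteness of $\normnu{\psi_{v_\chi}}$ then follows since, after the factors $|e^{\frac{\nu}2\scal{z_1,z_1}}|^2$ and $e^{-\nu H(z_1,z_1)}$ combine, the integrals in $y_1=\Im z_1$ and in $z_2$ are both convergent Gaussians.

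Next I would describe the elements of the space. Given $f\in\Fnugr(\Vc)$, put $g:=f/\psi_{v_\chi}$, which is entire (as $\psi_{v_\chi}$ is zero-free) and, by \eqref{FctEq3}, invariant under $z_1\mapsto z_1+\gamma$ for every $\gamma\in\Gamma$. Thus $g$ is holomorphic on $\Vc$ and $\Gamma$-periodic in the $\WC$-direction. Passing from $\WC\cong\C^r$ to $(\C^*)^r$ through the lattice coordinates, $g$ admits, for each fixed $z_2$, a Laurent expansion in the characters $e^{2\pi i\scal{z_1,\gamma^*}}$, $\gamma^*\in\Gamma^*$, the only entire $\Gamma$-periodic exponentials; this gives $g(z)=\sum_{\gamma^*\in\Gamma^*}c_{\gamma^*}(z_2)e^{2\pi i\scal{z_1,\gamma^*}}$ with coefficients $c_{\gamma^*}$ holomorphic in $z_2\in\WCo$ alone. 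The finiteness of $\normnu{f}$ forces each $c_{\gamma^*}$ into the Fock space of $\WCo\cong\C^{d-r}$ with weight $e^{-\nu H(z_2,z_2)}$, an orthogonal basis of which is the monomials $z_2^{\mathbf{k}}$; expanding $c_{\gamma^*}(z_2)=\sum_{\mathbf{k}}a_{\gamma^*,\mathbf{k}}z_2^{\mathbf{k}}$ and multiplying back by $\psi_{v_\chi}$ yields $f=\sum_{\gamma^*,\mathbf{k}}a_{\gamma^*,\mathbf{k}}\varphi_{\gamma^*,\mathbf{k}}$.

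For orthogonality and the norm I would work on the fundamental domain $\tilde{\Lambda}(\Gamma)=\Lambda_1(\Gamma)\times\WR\times\WCo$ in the coordinates $(x_1,y_1,z_2)=(\Re z_1,\Im z_1,z_2)$ and factorize $\scalnue{\varphi_{\gamma^*,\mathbf{k}},\varphi_{\gamma'^*,\mathbf{k}'}}$ into three integrals. The $x_1$-integral over $\Lambda_1(\Gamma)$ produces $vol(\Lambda_1(\Gamma))\,\delta_{\gamma^*,\gamma'^*}$ by the identity \eqref{int-orthog}; the $z_2$-integral is the standard complex Gaussian moment of monomials, giving $\delta_{\mathbf{k},\mathbf{k}'}(\pi/\nu)^{d-r}\mathbf{k}!/\nu^{|\mathbf{k}|}$; and once the cross terms in $e^{\frac{\nu}2\scal{z_1,z_1}}$ and $e^{-\nu H(z_1,z_1)}$ combine to $e^{-2\nu\norm{y_1}^2}$, the $y_1$-integral reduces to $\int_{\WR}e^{-2\nu\norm{y_1}^2-4\pi\scal{y_1,v_\chi+\gamma^*}}\,d\lambda(y_1)$. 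Completing the square in the basis $\{\omega_j\}$, so that $\norm{y_1}^2$ becomes the quadratic form of $G$ and $d\lambda(y_1)$ carries the factor $\sqrt{\det G}$, this factor cancels the $1/\sqrt{\det G}$ coming from the Gaussian normalization and leaves exactly $(\pi/2\nu)^{r/2}e^{\frac{2\pi^2}{\nu}\scal{\gamma^*+v_\chi,\gamma^*+v_\chi}}$. Multiplying the three factors and regrouping the powers of $\pi/\nu$ reproduces \eqref{norm-basiss}. Completeness then follows as in Theorem \ref{mainresult1}: orthogonality of $f$ to every $\varphi_{\gamma^*,\mathbf{k}}$ forces, by uniqueness of the Fourier coefficients $c_{\gamma^*}$ and completeness of the monomials in the Fock space, all $a_{\gamma^*,\mathbf{k}}=0$, hence $f=0$. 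Finally, the reproducing kernel property is obtained by showing that point evaluations are bounded, e.g. that $\sum_{\gamma^*,\mathbf{k}}\abs{\varphi_{\gamma^*,\mathbf{k}}(z)}^2/\normnu{\varphi_{\gamma^*,\mathbf{k}}}^2$ converges locally uniformly on $\Vc$; the kernel is then the associated normalized bilinear series.

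The main obstacle is the second step, namely justifying that a holomorphic function periodic only in the $r$ real directions spanned by $\Gamma$ genuinely admits an expansion $\sum_{\gamma^*}c_{\gamma^*}(z_2)e^{2\pi i\scal{z_1,\gamma^*}}$ whose coefficients are independent of $\Im z_1$ and depend holomorphically on $z_2$; this is the point where holomorphy is traded against periodicity through the map $\WC\to(\C^*)^r$, and it is also where one must control convergence carefully in order to license the term-by-term integration (Fubini) used in the norm computation. By comparison, the three Gaussian/lattice integrals are routine, the only bookkeeping subtlety being the cancellation of $\sqrt{\det G}$ against the normalization of the $y_1$-Gaussian that produces the exponential weight $e^{\frac{2\pi^2}{\nu}\scal{\gamma^*+v_\chi,\gamma^*+v_\chi}}$.
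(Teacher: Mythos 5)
Your proposal is correct and follows essentially the same route as the paper: you divide by the same exponential multiplier $\psi_{v_\chi}$ to obtain a $\Gamma$-periodic holomorphic function, expand it in a Fourier series whose coefficients are independent of $\Im z_1$ and holomorphic in $z_2$, expand those coefficients in monomials $z_2^{\mathbf{k}}$, and compute the norms by factoring $\scalnue{\cdot,\cdot}$ into the $x_1$-, $y_1$- and $z_2$-integrals, evaluating the $y_1$-integral with the Gaussian integral lemma exactly as in \eqref{norm-basiss}. The only difference is that you supply in-line arguments (the cocycle computation for nontriviality, the Laurent expansion on $(\C^*)^r$ in place of the paper's Lemma \ref{lem:FourierCoef} cited from \cite{Bargmann1967}, and boundedness of point evaluations for the reproducing kernel property) for the ingredients the paper imports from \cite{GhInSou2016}, which is a matter of self-containedness rather than a different method.
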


	The proofs of the first statements are quite similar to the one provided in \cite{GhInSou2016}.
To prove the last one, we make use of the following Lemma.
		\begin{lemma}[\cite
{Bargmann1967}] \label{lem:FourierCoef}
A $\Gamma$-periodic holomorphic function $h$ on $\WC$ can be expanded as follows
			$$h(w) =  \sum\limits_{\gamma^*\in\Gamma^*} b_{\gamma^*}(w) e^{ 2\pi i\scal{w,\gamma^*}} , $$
			where the series converges absolutely and uniformly on every compact subset of $\WC$. Furthermore, the coefficients $b_{\gamma^*}(w)$ are given by
		\begin{align}\label{FourierCoef}
        b_{\gamma^*}(w)=\frac{1}{vol(\Lambda_1(\Gamma))}\int_{\Lambda_1(\Gamma)} h(w) e^{-2\pi i\scal{\gamma^*,w}} d\lambda(x_1);\quad w=x_1+iy_1,
        \end{align}
			and are independents of $\Im m(w)$.
		\end{lemma}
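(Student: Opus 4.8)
The plan is to reduce the assertion to the classical Laurent expansion after descending to a complex torus, and then to extract both the coefficient formula \eqref{FourierCoef} and its independence of $\Im m(w)$ from the orthogonality relation \eqref{int-orthog}.

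First I would fix the $\Z$-basis $\omega_1,\cdots,\omega_r$ of $\Gamma$; it is at the same time an $\R$-basis of $\WR$ and a $\C$-basis of $\WC$. Writing $w=\sum_{j=1}^{r} w_j\omega_j$ with $w_j=s_j+it_j\in\C$, the $\Gamma$-periodicity of $h$ is precisely its invariance under each integral shift $w_j\mapsto w_j+1$. Consequently $h$ factors through the covering $w\mapsto\left(e^{2\pi i w_1},\cdots,e^{2\pi i w_r}\right)$, which identifies $\WC/\Gamma$ with $(\C^*)^r$, so that $h(w)=\widetilde{h}\left(e^{2\pi i w_1},\cdots,e^{2\pi i w_r}\right)$ for a uniquely determined holomorphic function $\widetilde{h}$ on $(\C^*)^r$.

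Next I would invoke the Laurent expansion $\widetilde{h}(u)=\sum_{n\in\Z^r} b_n\, u_1^{n_1}\cdots u_r^{n_r}$, valid on $(\C^*)^r$ and converging absolutely and uniformly on every compact subset. Denoting by $\{\omega_j^*\}$ the basis of $\WR$ dual to $\{\omega_j\}$, so that $\gamma^*=\sum_j n_j\omega_j^*$ runs over $\Gamma^*$ as $n$ runs over $\Z^r$, the monomial $u_1^{n_1}\cdots u_r^{n_r}=e^{2\pi i\sum_j n_jw_j}$ equals $e^{2\pi i\scal{w,\gamma^*}}$ by the very definition of the dual basis. Pulling back the expansion thus gives $h(w)=\sum_{\gamma^*\in\Gamma^*} b_{\gamma^*}\, e^{2\pi i\scal{w,\gamma^*}}$ with \emph{constant} coefficients $b_{\gamma^*}=b_n$; and since the covering carries compact subsets of $\WC$ to compact subsets of $(\C^*)^r$, the convergence is absolute and uniform on every compact subset of $\WC$, as required.

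It remains to identify $b_{\gamma^*}$ with the integral \eqref{FourierCoef} and to record the independence of $\Im m(w)$. For fixed $\gamma^*\in\Gamma^*$, I would multiply the uniformly convergent expansion by $e^{-2\pi i\scal{\gamma^*,w}}$ and integrate over $x_1\in\Lambda_1(\Gamma)$ at fixed $y_1=\Im m(w)$. After interchanging sum and integral and splitting $\scal{w,\mu^*}=\scal{x_1,\mu^*}+i\scal{y_1,\mu^*}$ together with the analogous splitting of $\scal{\gamma^*,w}$, the orthogonality relation \eqref{int-orthog} annihilates every term with $\mu^*\neq\gamma^*$, while the surviving $\mu^*=\gamma^*$ term contributes the two factors $e^{-2\pi\scal{y_1,\gamma^*}}$ and $e^{2\pi\scal{\gamma^*,y_1}}$, which cancel by the symmetry of $\scal{\cdot,\cdot}$. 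What is left is exactly $b_{\gamma^*}$, which proves \eqref{FourierCoef} and shows that its right-hand side does not depend on $y_1=\Im m(w)$.

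The hard part will be the justification of the descent to $(\C^*)^r$ and of the attendant convergence; holomorphy is indispensable here, as a merely smooth $\Gamma$-periodic function would have genuinely $\Im m(w)$-dependent Fourier coefficients. Should one wish to bypass the torus picture, the independence of $\Im m(w)$ can be obtained directly by differentiating \eqref{FourierCoef} in $t_j=\Im m(w_j)$ under the integral sign, replacing $\partial_{t_j}$ by $i\,\partial_{s_j}$ through the Cauchy--Riemann equations for the holomorphic, $\Gamma$-periodic integrand $h(w)e^{-2\pi i\scal{\gamma^*,w}}$, and noting that the $s_j$-integral of this derivative over a full period vanishes by periodicity; hence $\partial_{t_j}b_{\gamma^*}=0$ for every $j$.
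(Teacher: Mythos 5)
Your proof is correct, and it is worth noting that the paper itself gives no proof of this lemma at all: it is stated with a bare citation to Bargmann's 1967 paper, so there is nothing internal to compare against line by line. Your argument supplies the standard self-contained proof. The descent through $w\mapsto(e^{2\pi i w_1},\cdots,e^{2\pi i w_r})$ identifying $\WC/\Gamma$ with $(\C^*)^r$ is sound (the $\Z$-basis of $\Gamma$ is indeed a $\C$-basis of $\WC$, so $\Gamma$-periodicity is exactly $\Z^r$-periodicity in the coordinates $w_j$), the multivariate Laurent expansion on the Reinhardt domain $(\C^*)^r$ gives the absolute and locally uniform convergence, and the dual-basis bookkeeping correctly converts the monomials $u^n$ into the characters $e^{2\pi i\scal{w,\gamma^*}}$ with $\gamma^*$ ranging over $\Gamma^*$. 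Your extraction of \eqref{FourierCoef} via the orthogonality relation \eqref{int-orthog} is also right, including the small but essential cancellation of the factors $e^{-2\pi\scal{y_1,\gamma^*}}$ and $e^{2\pi\scal{\gamma^*,y_1}}$ by symmetry of the bilinear form; note that for $\mu^*\neq\gamma^*$ the discarded terms carry the harmless bounded factor $e^{-2\pi\scal{y_1,\mu^*-\gamma^*}}$, which your uniform convergence on $\Lambda_1(\Gamma)+iy_1$ licenses you to integrate term by term. Two remarks. First, your proof actually yields something marginally sharper than the lemma's literal phrasing: the coefficients $b_{\gamma^*}$ are genuine constants on $\WC$, not merely independent of $\Im m(w)$ --- this is consistent with the lemma (the integral \eqref{FourierCoef} has already integrated out $\Re m(w)$), and it matches how the paper uses the result in Section 4, where the residual dependence sits in the transverse variable $z_2\in\WCo$, not in $w=z_1$. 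Second, your closing alternative --- differentiating \eqref{FourierCoef} under the integral sign, trading $\partial_{t_j}$ for $i\partial_{s_j}$ via the Cauchy--Riemann equations, and killing the $s_j$-integral by periodicity of the integrand $h(w)e^{-2\pi i\scal{\gamma^*,w}}$ (periodic because $\scal{\gamma^*,\omega_k}\in\Z$) --- is a clean way to get the $\Im m(w)$-independence without the torus picture, and is closer in spirit to how such statements are verified in Bargmann's original setting; either route is complete.
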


			\begin{proof}[Proofs of \eqref{basisthetafock} and \eqref{norm-basiss}]
	Associated to a given holomorphic function $f$ displaying \eqref{FctEq3}, we consider
				\begin{equation}\label{expans}
					h(z):=h(z_1,z_2)= e^{ -\frac{\nu}{2}\scal{z_1,z_1} -2\pi i\scal{z_1,v_\chi}} f(z_1,z_2).
				\end{equation}
	Hence, we can easily check that $h$ is a holomorphic $\Gamma$-periodic function, and therefore the function $z_1\mapsto h_{z_2}(z_1):=h(z_1,z_2)$ can be expanded as
				\begin{equation}\label{func}
					h_{z_2}(z_1) =  \sum\limits_{\gamma^*\in\Gamma^*} b_{\gamma^*}(z_2) e^{ 2\pi i\scal{z_1,\gamma^*}},
				\end{equation}
for fixed $z_2$ in $\WCo$, according to Lemma \ref{lem:FourierCoef}. 
The Fourier coefficients $b_{\gamma^*}(z_2)$ are given through \eqref{FourierCoef} with $h(z_1,z_2)=h_{z_2}(z_1)$
and they are holomorphic on $\WCo$. Thus, they can be written as
				$$b_{\gamma^*}(z_2)=\sum\limits_{\mathbf{k}\in(\Z^+)^{d-r}}b_{\gamma^*,\mathbf{k}}z_2^{\mathbf{k}},$$
for some complex numbers $b_{\gamma^*,\mathbf{k}}$. Above, $z_2$ is identified with their coordinates in $\WCo$ and the functions $\varphi_{\gamma^*,\mathbf{k}}$ are generators of $\Fnugr(\Vc)$. For the orthogonality,
 we use Fubini's theorem to get
			\begin{align*}
			\scalnue{\varphi_{\gamma^*,\mathbf{k}}, \varphi_{{\gamma^*}',\mathbf{k}'}}
			&= \left(\int_{\Lambda_1(\Gamma)\times \WR} e^{\frac{\nu}{2} (\scal{z_1,z_1} + \scal{\overline{z_1},\overline{z_1}} -2 H(z_1,z_1) )
				+ 2i\pi \scal{z_1 - \overline{z_1},v_\chi} + 2i\pi( \scal{\gamma^*,z_1}  - \scal{{\gamma^*}', \overline{z_1} }) }   \meswc\right)
			\\ & \qquad \times   \left(\int_{\C^{d-r}}  z_{2}^{\mathbf{k}} \overline{z_{2}}^{\mathbf{k}'} e^{- \nu |z_2|^2_{\C^{d-r}} }\meswoc \right).
			\end{align*}
In the second integral in the right hand-side of the last identity, we recognize the scalar product of the monomials in the classical Bargmann-Fock space on $\C^{d-r}$, to wit
			$$  \int_{\C^{d-r}}  z_{2}^{\mathbf{k}} \overline{z_{2}}^{\mathbf{k}'} e^{-\nu |z_2|^2_{\C^{d-r}} }\meswoc
			=  \left(\frac{\pi}{\nu}\right)^{d-r}  \frac{k!}{\nu^{|\mathbf{k}|}} \delta_{\mathbf{k},\mathbf{k}'} .$$
	For $z_1= x_1+iy_1$ with $x_1,y_1\in \WR$, we have $\scal{\gamma^*,z_1}  - \scal{{\gamma^*}', \overline{z_1} } =\scal{\gamma^*-{\gamma^*}',x_1}  + i\scal{{\gamma^*}'+{\gamma^*}, y_1 } $.
	Note also that since $H(z_1,z_1) = \scal{z_1,\overline{z}}$, it follows
			$\scal{z_1,z_1} + \scal{\overline{z_1},\overline{z_1}} -2 H(z_1,z_1) = - 4 \scal{y_1, y_1}.$
	Hence, we get
			\begin{align*}
			\scalnue{\varphi_{\gamma^*,\mathbf{k}}, \varphi_{{\gamma^*}',\mathbf{k}'}}
& =  \left(\frac{\pi}{\nu}\right)^{d-r}  \frac{k!}{\nu^{|\mathbf{k}|}}
              \left(\int_{\Lambda_1(\Gamma)} e^{2i\pi\scal{\gamma^*-{\gamma^*}',x_1} }\mesw \right)
               \nonumber  \\ &\qquad \qquad \qquad \qquad \qquad \times
                \left(\int_{\WR} e^{ - 2\scal{y_1, y_1}   - 2\pi\scal{{\gamma^*}'+{\gamma^*}+2v_\chi, y_1 } }  d\lambda(y_1)\right)  \delta_{\mathbf{k},\mathbf{k}'}    \nonumber
			\\ &=  \left(vol(\Lambda_1(\Gamma))\right)^2 \left(\frac{\pi}{\nu}\right)^{d-r}  \frac{k!}{\nu^{|\mathbf{k}|}} \left(\int_{\R^r} e^{ - 2 y_1Gy_1  - 4\pi (\beta_\chi+m)Gy_1}   d\lambda(y_1)\right) \delta_{\gamma^*,{\gamma^*}'} \delta_{\mathbf{k},\mathbf{k}'}.
			\end{align*}
		To conclude, we need to the following explicit expression for the gaussian integral
		
 \begin{lemma}[\cite
 {Folland2004}]
 \label{gaussIntegral} Let $a>0$, $b\in \C^s$ and $A$ a symmetric $s\times s$ matrix whose $\Re e
		 	(A)$ is positive definite. Then,
		 	\begin{align}\label{GImatrix}
		 	\int_{\R^s} e^{-a y A y + by} dy = \left(\frac{1}{\sqrt{\det A}}\right)\left(\dfrac{\pi}{a}\right)^{s/2}  e^{\frac{1}{4 a} b A^{-1} b }.
		 	\end{align}	
 \end{lemma}
Hence by means of \eqref{GImatrix}, we obtain
					\begin{align}
					\scalnue{\varphi_{\gamma^*,\mathbf{k}}, \varphi_{{\gamma^*}',\mathbf{k}'}} =
                    \left(\frac{\pi}{\nu}\right)^{d-\frac{r}{2}} \frac{vol(\Lambda_1(\Gamma))}{2^{r/2}}\frac{k!}{\nu^{|\mathbf{k}|}} e^{2\frac{\pi^2}{\nu}\scal{\gamma^*+v_\chi,\gamma^*+v_\chi}} \delta_{\gamma^*,{\gamma^*}'}\delta_{\mathbf{k},\mathbf{k}'}.
					\end{align}
					The completeness can be obtained by proceeding in a similar way as in \cite{GhInSou2016}.
			\end{proof}

	\section{Proofs of Theorems \ref{mainresult2} and \ref{mainresult3}: the theta-Segal-Bargmann transform  }

In this section we look for the action of the so-called Segal-Bargmann transform $\mathcal{B}$ on $\LQc$. Recall that for given $\varphi:\V \longrightarrow \C$, we have \cite{Bargmann1961}
		\begin{align}\label{bargmann}
		[\mathcal{B}\varphi](z) =\left(\frac{\nu}{\pi}\right)^{\frac{3d}{4}}
		\int_{\V} e^{\sqrt{2}\nu \scal{z,x} -\frac{\nu}{2}\scal{z,z}} \varphi(x)e^{-\nu \norm{x}^2}\mes; \quad z\in \Vc=\R^{2d},
		\end{align}
provided that the integral exists. Then, it is a well-known fact that this transform maps isometrically the classical space $L^2(\V; e^{-\nu \norm{x}^2}d\lambda)$, of all $e^{-\nu \norm{x}^2}d\lambda$-square integrable functions on $\V$, onto the classical Bargmann-Fock space $\mathcal{F}^{2,\nu}(\Vc)$. Its kernel function turns out to be the exponential generating function of the Hermite polynomials.
Notice that the space $L^2(\V; e^{-\nu \norm{x}^2}d\lambda)$ (resp. $\mathcal{F}^{2,\nu}(\Vc)$) corresponds to $\LQc$ (resp. $\Fnugr(\Vc)$) with $\Gamma$ and $\chi$ are trivial, $\Gamma=\{0\}$ and $\chi=1$.
In order to characterize $\mathcal{B}(\LQc)$ for arbitrary rank $r$ discrete subgroup $\Gamma$ and character $\chi$, we begin with the following
	
\begin{proposition}
 The integral operator $\mathcal{B}$ is well defined on $\LQc$ and the integral defining $\mathcal{B}\varphi$; for $\varphi \in \LQc$, converges uniformly on compact sets of $\Vc$.
	\end{proposition}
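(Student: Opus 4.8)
The plan is to establish absolute convergence of the integral with a bound that is uniform over any compact set $K\subset\Vc$; uniform convergence on $K$ then follows by applying the same estimate to the tails. The essential difficulty is that an element $\varphi\in\LQc$ is only square-integrable against $e^{-\nu\norm{x}^2}$ over a fundamental domain and in fact grows like $e^{\frac{\nu}2\scal{x_1,x_1}}$ in the $\WR$-directions (compare the expansion \eqref{expansion0} in Theorem~\ref{thm-description}), so the integrand is not globally integrable in the naive sense. The device that resolves this is the $\Gamma$-periodicity of the density $x\mapsto\abs{\varphi(x)}^2 e^{-\nu\norm{x}^2}$, already observed after \eqref{Intrnorm}, combined with a Cauchy--Schwarz splitting against a weight that is summable over the lattice.

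Concretely, I would split $x=x_1+x_2$ and $z=z_1+z_2$ according to $\V=\WR\oplus\WRo$ and its complexification $\Vc=\WC\oplus\WCo$, so that $\scal{z,x}=\scal{z_1,x_1}+\scal{z_2,x_2}$ and $\norm{x}^2=\scal{x_1,x_1}+\scal{x_2,x_2}$ by orthogonality \eqref{splitQ}. Fix a real number $\epsilon$ with $0<\epsilon<\nu$ and introduce the weight $w(x):=e^{-\epsilon\scal{x_1,x_1}}$. Applying the Cauchy--Schwarz inequality to $\int_\V \abs{e^{\sqrt2\nu\scal{z,x}}}\,\abs{\varphi(x)}\,e^{-\nu\norm{x}^2}\mes$, with the integrand factorized through $w^{1/2}$ and $w^{-1/2}$, bounds it by the product of the square roots of
\[
I_1:=\int_\V \abs{\varphi(x)}^2 e^{-\nu\norm{x}^2}\,w(x)\,\mes
\qquad\text{and}\qquad
I_2(z):=\int_\V \abs{e^{\sqrt2\nu\scal{z,x}}}^2 e^{-\nu\norm{x}^2}\,w(x)^{-1}\,\mes.
\]

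For $I_1$, I would use that $\abs{\varphi}^2 e^{-\nu\norm{x}^2}$ is $\Gamma$-periodic in the $x_1$-variable: unfolding the $\WR$-integral over the $\Gamma$-translates of $\Lambda_1(\Gamma)$ turns the $w$-weight into the lattice sum $W(x_1)=\sum_{\gamma\in\Gamma}e^{-\epsilon\scal{x_1+\gamma,x_1+\gamma}}$, which converges and is bounded on $\WR$ since $\Gamma$ is a full-rank lattice in the $r$-dimensional space $\WR$. Hence $I_1\le \norm{W}_\infty\,\normqr{\varphi}^2<\infty$, a finite constant independent of $z$. For $I_2(z)$, the orthogonal splitting makes the integrand factor into a $\WR$-integral carrying $e^{-(\nu-\epsilon)\scal{x_1,x_1}}$ and a $\WRo$-integral carrying $e^{-\nu\scal{x_2,x_2}}$; since $\epsilon<\nu$ both are genuine convergent Gaussian integrals whose values are explicit Gaussian functions of $\Re z_1$ and $\Re z_2$, hence continuous in $z$ and bounded on the compact $K$. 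This yields absolute convergence together with $\sup_{z\in K}\abs{[\mathcal B\varphi](z)}<\infty$.

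Finally, for the uniform convergence I would run the identical Cauchy--Schwarz estimate on the tail integral over the exterior $\V\setminus B_R$ of the ball of radius $R$: the first factor becomes $\bigl(\int_{\V\setminus B_R}\abs{\varphi}^2 e^{-\nu\norm{x}^2}w\,\mes\bigr)^{1/2}$, which tends to $0$ as $R\to\infty$ independently of $z$ since it is the tail of the convergent integral $I_1$, while the second factor stays bounded by $I_2(z)^{1/2}$ uniformly on $K$. Thus the truncated integrals converge to $\mathcal B\varphi$ uniformly on $K$. The only genuinely delicate point is the choice of the intermediate weight: it must decay slowly enough in $x_1$ that the periodized sum $W$ remains bounded (forcing a Gaussian weight rather than, say, an integrable but non-periodizable one), yet strongly enough that the residual exponent $\nu-\epsilon$ in $I_2$ stays positive; the admissibility window $0<\epsilon<\nu$ is exactly what makes the two requirements compatible.
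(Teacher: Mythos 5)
Your argument is correct, but it is organized quite differently from the paper's. The paper unfolds the integral over $\V$ into a sum over the lattice translates $\gamma+\Lambda(\Gamma)$, pulls each piece back to the fundamental domain using the functional equation \eqref{FctEq2}, applies Cauchy--Schwarz to the single resulting integral $I_{\nu,\Gamma}(\varphi)$ over $\Lambda_1(\Gamma)\times\WRo$, and then controls the remaining lattice series by the Gaussian decay $e^{-\frac{\nu}{8}\scal{\gamma,\gamma}}$, valid for $\gamma$ large once $z$ is confined to a compact set. You instead keep the integral over all of $\V$ and run a weighted Cauchy--Schwarz with the auxiliary Gaussian $w(x)=e^{-\epsilon\scal{x_1,x_1}}$, $0<\epsilon<\nu$: your factor $I_1$ is finite because the $\Gamma$-periodic density $|\varphi(x)|^2e^{-\nu\norm{x}^2}$ unfolds against the periodized Gaussian $W(x_1)=\sum_{\gamma\in\Gamma}e^{-\epsilon\scal{x_1+\gamma,x_1+\gamma}}$, which is continuous, $\Gamma$-periodic and hence bounded on $\WR$, while $I_2(z)$ factors through the orthogonal splitting into explicit convergent Gaussian integrals, locally bounded in $z$. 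The two proofs rest on the same pillars---the periodicity induced by \eqref{FctEq2} and the summability of a Gaussian over the rank-$r$ lattice---but the decompositions differ: the paper decomposes the domain, you decompose the weight. Your version buys a cleaner outcome for the proposition itself: a linear bound $\sup_{z\in K}\big|[\mathcal{B}\varphi](z)\big|\leq c_K\normqr{\varphi}$ and a standard tail argument over balls $B_R$ for the uniform convergence, which in the paper is left implicit in the uniform majorization of its series. What the paper's heavier unfolding buys, and what your route does not produce, is the series representation \eqref{bseries1} of $[\mathcal{B}\varphi](z)$ as a lattice sum against a fundamental-domain integral: that formula is reused verbatim as the starting point of the proof of Theorem \ref{mainresult3}, where the sum over $\Gamma$ is recognized as the Riemann theta kernel. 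So your proof is a valid (arguably tidier) substitute for this proposition, but if it replaced the paper's argument, the identity \eqref{bseries1} would have to be derived separately before Theorem \ref{mainresult3}.
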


	\begin{proof} Starting from the fact $\V=\bigcup\limits_{\gamma\in\Gamma} ( \gamma +\Lambda(\Gamma))$, we can rewrite the Segal-Bargmann transform  as
		\begin{align*}
        [\mathcal{B}\varphi](z)& =\left(\frac{\nu}{\pi}\right)^{\frac{3d}{4}}e^{\frac{\nu}{2}\scal{z,z}}
		\int_{\V} e^{-\frac{\nu}{2}\scal{x-\sqrt{2}z,x-\sqrt{2}z}-\frac{\nu}{2}\scal{x,x}} \varphi(x) \mes
        \\&=\left(\frac{\nu}{\pi}\right)^{\frac{3d}{4}}e^{\frac{\nu}{2}\scal{z,z}}
		\sum_{\gamma\in\Gamma}\int_{\gamma+\Lambda(\Gamma)} e^{-\frac{\nu}{2}\scal{x-\sqrt{2}z,x-\sqrt{2}z}-\frac{\nu}{2}\scal{x,x}}  \varphi(x) \mes.
		\end{align*}
		Replacing $x$ by $x+\gamma$ in the above integral and using \eqref{FctEq2} satisfied by $\varphi$, we get
		\begin{align} \label{bseries1}
		[\mathcal{B}\varphi](z)&= 
	    \left(\frac{\nu}{\pi}\right)^{\frac{3d}{4}} e^{\frac{\nu}{2}\scal{z,z}}
         \\& \times \left( \sum_{\gamma\in\Gamma}\chi(\gamma)e^{-\frac{\nu}{2}\scal{\gamma,\gamma}+\sqrt{2}\nu \scal{z,\gamma}}\int_{\Lambda(\Gamma)}  e^{-\frac{\nu}{2}\scal{x-\sqrt{2}z,x-\sqrt{2}z}-\nu \scal{\gamma, x}-\frac{\nu}{2}\scal{x,x}}\varphi(x)\mes\right) \nonumber
		\end{align}
and therefore
			\begin{align} \label{bseries}
			\big|[\mathcal{B}\varphi](z)\big|\leq \left(\frac{\nu}{\pi}\right)^{\frac{3d}{4}}
                 \sum_{\gamma\in\Gamma}\big|e^{\frac{\nu}{2}\scal{z,z}-\frac{\nu}{4}\scal{\gamma,\gamma}+\sqrt{2}\nu \scal{z,\gamma}}  I_{\nu,\Gamma}(\varphi)(z_1,z_2)\big|,
			\end{align}
since $-\scal{\gamma,x}=-\scal{\gamma,x_1}\leq \scal{x_1,x_1}+\frac{1}{4}\scal{\gamma,\gamma}$. The quantity $I_{\nu,\Gamma}(\varphi)(z_1,z_2) $ stands for
 $$ I_{\nu,\Gamma}(\varphi)(z_1,z_2)= \int_{\Lambda_1(\Gamma)\times\WRo}  e^{-\frac{\nu}{2}\scal{x-\sqrt{2}z,x-\sqrt{2}z}+\frac{\nu}{2}\scal{x_1,x_1}-\frac{\nu}{2}\scal{x_2,x_2}}\varphi(x)\mesw\meswo.$$
The Cauchy-Schwarz inequality combined with the Fubini theorem yields the following estimate
		\begin{align*}
\left|I_{\nu,\Gamma}(\varphi)(z_1,z_2)\right|^2
	 &\leq \normqr{\varphi}^2\left(\int_{\Lambda_1(\Gamma)}\left| e^{-\frac{\nu }{2}\scal{x_1-\sqrt{2}z_1,x_1-\sqrt{2}z_1}+\frac{\nu}{2}\scal{x_1,x_1}}\right|^2\mesw\right)
\\ & \qquad \times \left(\int_{\WRo}\left|e^{-\frac{\nu}{2}\scal{x_2-\sqrt{2}z_2,x_2-\sqrt{2}z_2}-\frac{\nu}{2}\scal{x_2,x_2}}\right|^2\meswo\right).
		\end{align*}
The first integral involved in the right hand-side of the previous inequality is clearly finite for  $\Lambda_1(\Gamma)$ being compact.
 The second one can be shown to be finite using Lemma \ref{gaussIntegral}. Moreover, for every $z$ in any compact set $K\subset\Vc$, we have $\Re(\scal{\gamma,z})\leq \frac{1}{8}\scal{\gamma,\gamma}$ for $\gamma \in \Gamma$ outside certain disc $D(0,R)$; $R>0$. Thus, there exists a constant $c_K$ such that
		\begin{equation}\label{estimateba}
		\big|[\mathcal{B} \varphi](z)\big|\leq c_K\sum_{\gamma\in\Gamma} e^{-\frac{\nu}{8}\scal{\gamma,\gamma}}.
		\end{equation}
This means that the quantity $\mathcal{B} \varphi$ is well defined for every $\varphi\in\LQc$, since the series in the right hand-side of \eqref{bseries1} converges.
	\end{proof}

A part of the proof of Theorem \eqref{mainresult2} is contained in the following proposition showing the image of the $(L^2,\Gamma,\chi)$-likewise theta functions on $\V$, by the $\mathcal{B}$, are the  $(\widetilde{\Gamma},\widetilde{\chi})$-holomorphic theta functions on $\Vc$, where $\widetilde{\Gamma}$ is the scaled discrete subgroup $\widetilde{\Gamma}:=\Gamma/{\sqrt{2}}$ and $\widetilde{\chi}$ is the $\widetilde{\Gamma}$-character defined by $$\widetilde{\chi}(\widetilde{\gamma})=e^{2i\pi\scal{\widetilde{\gamma},\sqrt{2}v_\chi}} .$$

	\begin{proposition}\label{PBT} For every $\varphi\in \LQc$, the function $z\mapsto[\mathcal{B} \varphi](z)$ is holomorphic on $\Vc$ and satisfies the functional equation
		\begin{equation}\label{FctEqBargm}
		[\mathcal{B} \varphi] (z+\widetilde{\gamma})=\widetilde{\chi}(\widetilde{\gamma}) e^{\nu H(z+\frac{\widetilde{\gamma}}{2},\widetilde{\gamma})} [\mathcal{B} \varphi] (z)
		\end{equation}
		for every $z$ in $\Vc$ and every $\widetilde{\gamma}$ in $\widetilde{\Gamma}$.
	\end{proposition}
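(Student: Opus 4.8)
The plan is to treat the two assertions in turn, deferring the convergence issue to the preceding proposition. For holomorphy, I would argue that the integrand $z\mapsto e^{\sqrt{2}\nu \scal{z,x}-\frac{\nu}{2}\scal{z,z}}$ is entire for each fixed $x\in\V$, and that, by the proposition just proved, the integral defining $\mathcal{B}\varphi$ converges uniformly on every compact subset of $\Vc$. Writing $\mathcal{B}\varphi$ as the uniform-on-compacts limit of the partial sums over $\gamma\in\Gamma$ coming from the tiling $\V=\bigcup_{\gamma\in\Gamma}(\gamma+\Lambda(\Gamma))$, each of which is holomorphic in $z$, holomorphy of the limit follows from the Weierstrass convergence theorem; equivalently one applies Morera's theorem, interchanging a contour integral with the $x$-integral by Fubini.

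For the functional equation the heart of the matter is a direct computation. Fix $\gamma\in\Gamma$ and write $\widetilde{\gamma}=\gamma/\sqrt{2}$. I would start from the defining integral for $[\mathcal{B}\varphi](z+\widetilde{\gamma})$ and expand the quadratic exponent by bilinearity, using $\scal{z,\widetilde{\gamma}}=\frac{1}{\sqrt{2}}\scal{z,\gamma}$ and $\scal{\widetilde{\gamma},\widetilde{\gamma}}=\frac12\scal{\gamma,\gamma}$; in particular the linear piece $\sqrt{2}\nu\scal{\widetilde{\gamma},x}$ becomes $\nu\scal{\gamma,x}$. Next I perform the change of variable $x\mapsto x+\gamma$, which preserves $\mess$ and leaves the domain $\V$ invariant, and I invoke the functional equation \eqref{FctEq2} in the form $\varphi(x+\gamma)=\chi(\gamma)e^{\nu\scal{x+\frac{\gamma}{2},\gamma}}\varphi(x)$. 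Since the negligible set on which $\varphi$ is undefined is mapped to a negligible set by the translation, this substitution is legitimate for almost every $x$.

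The decisive step is the bookkeeping of the exponents, and it is here that the scaling by $\sqrt{2}$ pays off. After the substitution the three $x$-linear cross terms in the exponent are: $+\nu\scal{x,\gamma}$ from the kernel, $-2\nu\scal{x,\gamma}$ from $e^{-\nu\norm{x+\gamma}^2}$, and $+\nu\scal{x,\gamma}$ from the automorphy factor of $\varphi$; these cancel exactly, so that no $x$-dependent cross term survives. The residual $\scal{z,z}$, $\scal{z,x}$ and $\scal{x,x}$ terms reassemble, together with $\varphi(x)$, the prefactor $(\nu/\pi)^{3d/4}$ and the integration in $x$, into precisely $[\mathcal{B}\varphi](z)$, while the terms independent of $x$ collect into the scalar $\chi(\gamma)\,e^{\frac{\nu}{\sqrt{2}}\scal{z,\gamma}+\frac{\nu}{4}\scal{\gamma,\gamma}}$ that factors out of the integral.

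It then remains to recognize this scalar as the claimed automorphy factor. By Lemma \ref{Lem:condidatFct}(i) one has $\widetilde{\chi}(\widetilde{\gamma})=e^{2i\pi\scal{\widetilde{\gamma},\sqrt{2}v_\chi}}=e^{2i\pi\scal{\gamma,v_\chi}}=\chi(\gamma)$. Moreover, since $\widetilde{\gamma}$ is real we have $H(z+\frac{\widetilde{\gamma}}{2},\widetilde{\gamma})=\scal{z+\frac{\widetilde{\gamma}}{2},\widetilde{\gamma}}=\frac{1}{\sqrt{2}}\scal{z,\gamma}+\frac14\scal{\gamma,\gamma}$, whence $e^{\nu H(z+\frac{\widetilde{\gamma}}{2},\widetilde{\gamma})}=e^{\frac{\nu}{\sqrt{2}}\scal{z,\gamma}+\frac{\nu}{4}\scal{\gamma,\gamma}}$. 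Comparing shows that the prefactor is exactly $\widetilde{\chi}(\widetilde{\gamma})\,e^{\nu H(z+\frac{\widetilde{\gamma}}{2},\widetilde{\gamma})}$, which is \eqref{FctEqBargm}. Beyond this algebra the only genuine subtlety is the almost-everywhere validity of the change of variable noted above; the particular definitions of $\widetilde{\Gamma}$ and $\widetilde{\chi}$ are engineered precisely so that the cross terms cancel and the resulting factor matches the Hermitian form $H$.
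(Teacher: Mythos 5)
Your proposal is correct and follows essentially the same route as the paper: holomorphy via the holomorphic integrand plus uniform convergence on compacts from the preceding proposition, and the functional equation via the substitution $x\mapsto x+\gamma$ combined with \eqref{FctEq2}, then identifying $\chi(\gamma)=\widetilde{\chi}(\widetilde{\gamma})$ and $\scal{z+\frac{\widetilde{\gamma}}{2},\widetilde{\gamma}}=H(z+\frac{\widetilde{\gamma}}{2},\widetilde{\gamma})$ for real $\widetilde{\gamma}$. The only cosmetic difference is that the paper first completes the square, writing the exponent as $-\frac{\nu}{2}\scal{x-\sqrt{2}z,x-\sqrt{2}z}+\frac{\nu}{2}\scal{z,z}-\frac{\nu}{2}\scal{x,x}$ so the translation bookkeeping is automatic, whereas you expand by bilinearity and check the cancellation of the cross terms directly; both computations agree.
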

	
	\begin{proof}
For given $\varphi\in \LQc$, the function $z \mapsto  e^{\sqrt{2}\nu \scal{z,x} -\frac{\nu}{2}\scal{z,z}- \nu \|x\|^2} \varphi(x)$
involved in the integrand of $\mathcal{B} \varphi$ is clearly holomorphic in $z$ for every fixed $x\in \V$.
 Thus, by the uniform convergence of the integral in $\mathcal{B} \varphi$ on compact subsets of $\Vc$ (Proposition \ref{PBT}), it follows that
 $[\mathcal{B} \varphi] (z)$ is holomorphic on $\Vc$. Moreover, direct computation infers
\begin{align*}
		[\mathcal{B} \varphi] (z+\frac{1}{\sqrt{2}}\gamma)
		&=\left(\frac{\nu}{\pi}\right)^{\frac{3d}{4}}e^{\frac{\nu}{2}\scal{z,z}+\nu \scal{z+\frac{\widetilde{\gamma}}{2},\widetilde{\gamma}}}
		\int_{\V} e^{-\frac{\nu }{2}\scal{x-\sqrt{2}z-\gamma,x-\sqrt{2}z-\gamma}-\frac{\nu}{2}\|x\|^2} \varphi(x) \mes.
		\end{align*}
Now, making use of the change $y=x-\gamma$ as well as the fact that $\varphi$ satisfies the functional equation \eqref{FctEq2}, we obtain
		\begin{align*}
	[\mathcal{B} \varphi] (z+\widetilde{\gamma})= \chi(\gamma)e^{\nu \scal{z+\frac{\tilde{\gamma}}{2},\tilde{\gamma}}}[\mathcal{B} \varphi] (z)
= \widetilde{\chi}(\widetilde{\gamma}) e^{\nu H(z+\frac{\widetilde{\gamma}}{2},\widetilde{\gamma})} [\mathcal{B} \varphi] (z),
		\end{align*}
since $\scal{z+\frac{\tilde{\gamma}}{2},\tilde{\gamma}}=H(z+\frac{\tilde{\gamma}}{2},\tilde{\gamma})$ for $\overline{\tilde{\gamma}}=\tilde{\gamma} $
		and  $\chi(\gamma)=e^{2i\pi\scal{\gamma,v\chi}}=e^{2i\pi\scal{\frac{\gamma}{\sqrt2},\sqrt{2}v\chi}}=\tilde{\chi}(\tilde{\gamma})$
		by Lemma \ref{Lem:condidatFct}.
\end{proof}
	
The proof of Theorem \ref{mainresult2}, i.e., 	$\mathcal{B} (\LQc)= \Fnugrt(\Vc)$ isometrically, reduces further to show that the Segal-Bargmann transform $\mathcal{B}$ maps an orthonormal basis $e_{\gamma^*,\mathbf{k}}$ of the Hilbert space $\LQc$ to the an orthonormal one of the Hilbert space $\Fnugrt(\Vc)$.

\begin{proof}[Proof of Theorem \ref{mainresult2}]
Notice first that the action of $\mathcal{B}$ on the basis $e_{\gamma^*,\mathbf{k}}$ of the Hilbert space $\LQc$ is given by
		\begin{align}\label{evalu}
		[\mathcal{B}e_{\gamma^*,\mathbf{k}}](z)
		& =\left(\frac{\nu}{\pi}\right)^{\frac{3d}{4}} e^{-\frac{\nu}{2}\scal{z_1,z_1}}\left(\int_{\WR}
		e^{\sqrt{2}\nu \scal{x_1,z_1}+2i\pi \scal{x_1,\gamma^*+v_\chi}-\frac{\nu}{2}\scal{x_1,x_1}} \mesw \right)
\\ &\qquad \times \left(\int_{\WRo}  e^{-\frac{\nu}{2}\scal{z_2,z_2}+\sqrt{2}\nu \scal{x_2,z_2}} \mathbf{H}^\nu_{\mathbf{k}}(x_2)e^{-\nu\scal{x_2,x_2}}\meswo\right).\nonumber
		\end{align}
In the right hand-side of the second integral occurring in \eqref{evalu}, we recognize the action of the Segal-Bargmann transform of the Hermite polynomial on $\R^{d-r}$  given by
		\begin{equation}\label{e1}
		\left(\frac{\nu}{\pi}\right)^{\frac{3(d-r)}{4}}\int_{\WRo}  e^{-\frac{\nu}{2}\scal{z_2,z_2}+\sqrt{2}\nu \scal{x_2,z_2}} \mathbf{H}^\nu_{\mathbf{k}}(x_2)e^{-\nu\scal{x_2,x_2}}\meswo=\left(\frac{\nu}{\pi}\right)^{\frac{d-r}{4}}(2\nu)^{|\mathbf{k}|/2}z_2^{\mathbf{k}}.
		\end{equation}
The first integral in the right hand-side of \eqref{evalu} can be handled by applying Lemma \ref{gaussIntegral}. Indeed, if $G:=\left(\scal{\omega_j,\omega_k}\right)_{j,k=1}^r$ denotes the Gram-Schmidt matrix of $Q_1=\scal{.,.}$ on $\WR$ with respect to the basis $\omega_1,\cdots,\omega_r$, and by $x_1\in\R^r$, $z_1\in\C^r$, $m^*\in\R^r$ and $\beta_\chi\in\R^r$ the coordinates of $x_1\in\WR$, $z_1\in\WC$, $\gamma^*$ and $v_\chi$, respectively, we get
\begin{align}
	\int_{\WR} 	& e^{\sqrt{2}\nu \scal{x_1,z_1}+2i\pi \scal{x_1,\gamma^*+v_\chi}-\frac{\nu}{2}\scal{x_1,x_1}} \mesw \nonumber
\\& \qquad\qquad =vol(\Lambda_1(\Gamma))\int_{\R^r} e^{-\frac{\nu}{2} x_1 G x_1 + x_1 G \left(\sqrt{2} \nu z_1+2i\pi(m^*+\beta_\chi)\right)} \mesw \nonumber
\\ & \qquad\qquad =\left(\frac{2\pi}{\nu}\right)^{\frac{r}{2}}e^{\nu \scal{z_1,z_1}+2i\pi\sqrt{2} \scal{z_1,\gamma^*+v_\chi}-2\frac{\pi^2}{\nu}\scal{\gamma^*+v_\chi,\gamma^*+v_\chi}}\nonumber
\\&\qquad\qquad \stackrel{\mbox{Lemma }\ref{gaussIntegral}}{=}
\left(\frac{2\pi}{\nu}\right)^{\frac{r}{2}}e^{\nu \scal{z_1,z_1}+2i\pi \scal{z_1,\widetilde{\gamma}^*+\widetilde{v_\chi}}-\frac{\pi^2}{\nu}\scal{\widetilde{\gamma}^*+\widetilde{v_\chi},\widetilde{\gamma}^*+\widetilde{v_\chi}}}.
\label{e2}
	\end{align}
Finally, by taking into account the fact that $\widetilde{\Gamma}^*=\sqrt{2}\Gamma^*$ and inserting \eqref{e2} and \eqref{e1} in \eqref{evalu}, one obtains
$ [\mathcal{B}e_{\gamma^*,\mathbf{k}}](z) =  \varphi_{\widetilde{\gamma}^*,\mathbf{k}}.$
This completes the proof, since $ \varphi_{\widetilde{\gamma}^*,\mathbf{k}} $, for varying $\widetilde{\gamma}^*\in \widetilde{\Gamma}^*$ and $\mathbf{k}\in (\Z^+)^{d-r}$, constitute a complete orthogonal system of $\Fnugrt(\Vc)$.
	\end{proof}

Below, we give a proof of Theorem \ref{mainresult3} saying that the Segal-Bargmann transform on $\LQc$ can be realized as
		\begin{equation}\label{TransfBargm2}
			[\mathcal{B}\varphi](z)= \left(\frac{\nu}{\pi}\right)^{\frac{3d}{4}} \int_{\Lambda(\Gamma)}
           e^{\sqrt{2}\nu \scal{z,x}-\frac{\nu}{2}\scal{z,z}} \Theta_{0,G\beta_\chi}\left(\frac{i\nu}{2\pi}G(x_1-\sqrt{2}z_1)\bigg|
          \frac{i\nu}{2\pi}G\right) \varphi(x)e^{-\nu \|x\|^2} \mes, 
		\end{equation}
where $\beta_\chi=(\beta_1,\cdots,\beta_r)\in\R^r$ are the coordinates of $v_\chi$ in the basis $\omega_1,\cdots,\omega_r$.

\begin{proof}[Proof of Theorem \ref{mainresult3}]
	Starting from \eqref{bseries1}, we need to give a closed expression of the sum in the right hand-side in terms of the modified theta function. In fact, for $\gamma\in\Gamma$ with the coordinates $m\in\Z^r$, we have
		\begin{align*}	\sum_{\gamma\in\Gamma}\chi(\gamma)e^{-\frac{\nu}{2}\scal{\gamma,\gamma}+\nu\scal{\sqrt{2}z-x,\gamma}}
			&=\sum_{\gamma\in\Gamma}e^{-\frac{\nu}{2}\scal{\gamma,\gamma}+\nu\scal{\sqrt{2}z_1-x_1+\frac{2i\pi}{\nu}v_\chi,\gamma}}\\
			&=\sum_{m\in\Z^r}e^{-\frac{\nu}{2}mGm+\nu mG(\sqrt{2}z_1-x_1+\frac{2i\pi}{\nu}\beta_\chi)}\\&=
			\Theta_{0,G\beta_\chi}\left(\frac{i\nu}{2\pi}G(x_1-\sqrt{2}z_1)\bigg| \frac{i\nu}{2\pi}G\right).
		\end{align*}
The vectors $x_1$ and $z_1$ are identified with their coordinates relatively to the basis $\omega_1,\cdots,\omega_r$ 
as mentioned in proof of Lemma \ref{Lem:condidatFct}.
	\end{proof}

We conclude this paper by noting that the kernel function
\begin{equation}
			A^\nu_{\Gamma,\chi}(z;x) :=\left(\frac{\nu}{\pi}\right)^{\frac{3d}{4}}e^{\sqrt{2}\nu \scal{z,x}-\frac{\nu}{2}\scal{z,z}}
                                      \Theta_{0,G\beta_\chi}\left(\frac{i\nu}{2\pi}G(x_1-\sqrt{2}z_1)\bigg| \frac{i\nu}{2\pi}G\right),
		\end{equation}
is in fact the bilateral generating function corresponding to the orthogonal basis $e_{\gamma^*,\mathbf{k}}$ of $\LQc$ and $\varphi_{\sqrt{2}\gamma^*,\mathbf{k}}$ of $\Fnugrt(\Vc)$. Namely, we assert the following

\begin{proposition} We have
         		\begin{equation}\label{GenFct}
A^\nu_{\Gamma,\chi}(z;x) = \sum_{\gamma^*\in\Gamma^*,\mathbf{k}\in(\Z^+)^{d-r}}
\frac{\varphi_{\sqrt{2}\gamma^*,\mathbf{k}}(z)}{\left\Vert \varphi_{\sqrt{2}\gamma^*,\mathbf{k}} \right\Vert_{\widetilde{\Gamma},H}}
\frac{\overline{e_{\gamma^*,\mathbf{k}}(x)}}{\normqr{e_{\gamma^*,\mathbf{k}}}} .
		\end{equation}
		\end{proposition}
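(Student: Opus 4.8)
The plan is to verify the claimed bilateral generating function identity \eqref{GenFct} by computing the right-hand side explicitly and matching it to the closed form of $A^\nu_{\Gamma,\chi}(z;x)$. The essential observation is that the sum over the two multi-indices $\gamma^*\in\Gamma^*$ and $\mathbf{k}\in(\Z^+)^{d-r}$ factors, because both the basis functions and their norms split as a product of an $x_1$-dependent (resp. $z_1$-dependent) piece indexed by $\gamma^*$ and an $x_2$-dependent (resp. $z_2$-dependent) piece indexed by $\mathbf{k}$. So first I would separate the summation into the product of two independent series, one over $\mathbf{k}$ and one over $\gamma^*$.

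For the $\mathbf{k}$-sum I would recognize the classical Mehler-type bilateral generating function of the Hermite polynomials. Using the basis $\varphi_{\sqrt 2\gamma^*,\mathbf{k}}$ of $\Fnugrt(\Vc)$ from \eqref{basisthetafock} and $e_{\gamma^*,\mathbf{k}}$ of $\LQc$ from \eqref{basis}, the $z_2$- and $x_2$-dependent factors are $z_2^{\mathbf{k}}$ and $\mathbf{H}^\nu_{\mathbf{k}}(x_2)$, while the normalizing constants contribute $2^{|\mathbf{k}|}\mathbf{k}!$ factors (from \eqref{norm:e} and \eqref{norm-basiss}). Summing $\sum_{\mathbf{k}} z_2^{\mathbf{k}}\,\overline{\mathbf{H}^\nu_{\mathbf{k}}(x_2)}/(2^{|\mathbf{k}|}\mathbf{k}!)$ (up to the appropriate $\nu$-powers) should reproduce, via the standard generating identity $\sum_k H_k(\xi)t^k/k! = e^{2\xi t - t^2}$ applied coordinate-wise, exactly the Gaussian factor $e^{\sqrt 2\nu\scal{x_2,z_2}-\frac\nu2\scal{z_2,z_2}}$ that appears in $e^{\sqrt2\nu\scal{z,x}-\frac\nu2\scal{z,z}}$ restricted to the $\WRo$-component. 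This is the same computation already carried out in \eqref{e1} during the proof of Theorem \ref{mainresult2}, so I would simply invoke it.

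For the $\gamma^*$-sum, the $z_1$- and $x_1$-dependent factors together with the $\gamma^*$-dependent Gaussian normalizer $e^{2\frac{\pi^2}{\nu}\scal{\gamma^*+v_\chi,\gamma^*+v_\chi}}$ in \eqref{norm-basiss} should be recombined. Writing $\gamma^*$ in coordinates $m^*\in\Z^r$ relative to the dual basis and completing the square in the exponent, I expect the $\gamma^*$-indexed series to collapse into precisely the theta series $\Theta_{0,G\beta_\chi}\big(\frac{i\nu}{2\pi}G(x_1-\sqrt2 z_1)\,\big|\,\frac{i\nu}{2\pi}G\big)$ from definition \eqref{defTheta}, up to the remaining $z_1,x_1$-Gaussian that supplies the $e^{\sqrt2\nu\scal{x_1,z_1}-\frac\nu2\scal{z_1,z_1}}$ factor. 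The main obstacle is bookkeeping: I must carefully track the normalization powers of $\nu/\pi$ and $\pi/\nu$, the factor $vol(\Lambda_1(\Gamma))$, and the half-integer powers $2^{r/2}$ and $(\pi/\nu)^{(d-r)/2}$ across the two norm formulas, and confirm that after cancellation only the prefactor $(\nu/\pi)^{3d/4}$ survives. I would therefore first assemble the combined normalizing constant $1/(\normnu{\varphi_{\sqrt2\gamma^*,\mathbf{k}}}\normqr{e_{\gamma^*,\mathbf{k}}})$, check that its $\gamma^*$-dependence is exactly the reciprocal Gaussian needed to turn the naive series into the convergent theta function, and only then perform the two factored summations and read off agreement with the closed form of $A^\nu_{\Gamma,\chi}(z;x)$.
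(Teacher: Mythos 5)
Your overall strategy---split the double sum into a $\mathbf{k}$-part and a $\gamma^*$-part, evaluate the $\mathbf{k}$-part by the Hermite generating function, and recognize the $\gamma^*$-part as a theta series---is the same as the paper's, and the $\mathbf{k}$-part is essentially correct (one quibble: what does the work there is the generating identity $\sum_{\mathbf{k}}(\sqrt{\nu/2}\,z_2)^{\mathbf{k}}\mathbf{H}^\nu_{\mathbf{k}}(x_2)/\mathbf{k}! = e^{-\frac{\nu}{2}\scal{z_2,z_2}+\sqrt{2}\nu\scal{x_2,z_2}}$, a series identity cited from Lebedev, not the computation \eqref{e1}, which is an \emph{integral} identity for the Bargmann transform of $\mathbf{H}^\nu_{\mathbf{k}}$; they yield the same exponential but are different statements). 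The genuine gap is in the $\gamma^*$-sum. After dividing by the norm \eqref{norm-basiss}, the $\gamma^*$-dependent weight is the Gaussian $e^{-\frac{2\pi^2}{\nu}\scal{\gamma^*+v_\chi,\gamma^*+v_\chi}}$, so the series you must evaluate is
\begin{equation*}
\sum_{\gamma^*\in\Gamma^*} e^{-\frac{2\pi^2}{\nu}\scal{\gamma^*+v_\chi,\gamma^*+v_\chi}+2\pi i\scal{\sqrt{2}z_1-x_1,\,\gamma^*+v_\chi}},
\end{equation*}
a sum over the \emph{dual} lattice: in coordinates ($\gamma^*\simeq G^{-1}m$, $m\in\Z^r$) its quadratic form is $\frac{2\pi^2}{\nu}\,mG^{-1}m$. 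No completion of squares can turn this into the theta function of the statement, whose period matrix is $\Omega=\frac{i\nu}{2\pi}G$ and whose quadratic form is $\frac{\nu}{2}\,mGm$; what you actually get is $\Theta_{0,0}\left(\sqrt{2}z_1-x_1+\frac{2i\pi}{\nu}\beta_\chi \,\big|\, -\Omega^{-1}\right)$, a theta function attached to the \emph{inverted} period matrix. The missing ingredient is the Jacobi--Riemann inversion formula (equivalently, Poisson summation),
\begin{equation*}
\Theta_{0,0}\left(\Omega^{-1}w\,\big|\,-\Omega^{-1}\right)=\sqrt{\det\left(-i\Omega\right)}\,e^{i\pi w\Omega^{-1}w}\,\Theta_{0,0}\left(w\,\big|\,\Omega\right),
\end{equation*}
which is exactly what the paper invokes from Mumford at this point.

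This step is not cosmetic; it is what makes your bookkeeping of constants close. The inversion produces the factor $\sqrt{\det(-i\Omega)}=\left(\frac{\nu}{2\pi}\right)^{r/2}\sqrt{\det G}=\left(\frac{\nu}{2\pi}\right)^{r/2}vol(\Lambda_1(\Gamma))$, and this is precisely what cancels the residual prefactor $\frac{2^{r/2}}{vol(\Lambda_1(\Gamma))}\left(\frac{\nu}{\pi}\right)^{\frac{3d}{4}-\frac{r}{2}}$ left over from the two norm formulas \eqref{norm:e} and \eqref{norm-basiss}, leaving exactly $\left(\frac{\nu}{\pi}\right)^{3d/4}$; simultaneously the exponential $e^{i\pi w\Omega^{-1}w}$ supplies the missing piece of the Gaussian prefactor $e^{\sqrt{2}\nu\scal{z,x}-\frac{\nu}{2}\scal{z,z}}$. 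Without this transformation law your factored $\gamma^*$-series remains a dual-lattice theta function with the wrong period matrix, the $vol(\Lambda_1(\Gamma))$ and $2^{r/2}$ factors do not cancel, and the claimed identification with $A^\nu_{\Gamma,\chi}(z;x)$ cannot be completed.
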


		\begin{proof}
		Using the explicit expressions of $\varphi_{\sqrt{2}\gamma^*,\mathbf{k}}$ given by \eqref{basisthetafock} and of $e_{\gamma^*,\mathbf{k}}$ given by \eqref{thm:expansion} as well as their norms given respectively through \eqref{norm-basiss} and \eqref{norm:e}, we can rewrite the right hand-side of
\eqref{GenFct} as
		\begin{align*}
		&	\frac{2^{r/2}} {vol(\Lambda_1(\Gamma))}   \left(\frac{\nu}{\pi}\right)^{\frac{3d}{4}-\frac{r}{2}}e^{ \frac{\nu}{2}(\scal{z_1,z_1}+\scal{x_1,x_1})}
  \\& \times \sum_{\gamma^*\in\Gamma^*,\mathbf{k}\in(\Z^+)^{d-r}}e^{-\frac{\pi^2}{\nu}\scal{\sqrt{2}\gamma^*+\sqrt{2}v_\chi,\sqrt{2}\gamma^*+\sqrt{2}v_\chi}+2\pi i\scal{\sqrt{2}z_1-x_1,\gamma^*+v_\chi}}\frac{\left(\sqrt{\frac{\nu}{2}}z_2\right)^{\mathbf{k}}\mathbf{H}^\nu_{\mathbf{k}}(x_2)}			 {\mathbf{k}!}.\end{align*}
Using the generating formula for Hermite polynomials (see for example \cite[p. 60]{Lebedev1972}), we get
			 \begin{equation*}
			 \sum_{\mathbf{k}\in(\Z^+)^{d-r}}\frac{\left(\sqrt{\frac{\nu}{2}}z_2\right)^{\mathbf{k}}\mathbf{H}^\nu_{\mathbf{k}}(x_2)} {\mathbf{k}!}=e^{-\frac{\nu}{2}\scal{z_2,z_2}+\sqrt{2}\nu \scal{x_2,z_2}}.
			 \end{equation*}
			 Now, note that the dual lattice $\Gamma^*$ can be identified with $G^{-1}\Gamma$, and hence
			 \begin{align*}
			 \sum_{\gamma^*\in\Gamma^*}
                      & e^{-\frac{\pi^2}{\nu}\scal{\sqrt{2}\gamma^*+\sqrt{2}v_\chi,\sqrt{2}\gamma^*+\sqrt{2}v_\chi}+2\pi i\scal{\sqrt{2}z_1-x_1,\gamma^*+v_\chi}}
                      \\&=e^{-\frac{2\pi^2}{\nu}\scal{v_\chi,v_\chi}+2i\pi\scal{\sqrt{2}z_1-x_1,v_\chi}}
                      \sum_{m\in\Z^r}e^{-\frac{2\pi^2}{\nu}mG^{-1}m+2i\pi(\sqrt{2}z_1-x_1+\frac{2i\pi}{\nu}\beta_\chi)m}
                      \\&=e^{-\frac{2\pi^2}{\nu}\scal{v_\chi,v_\chi}+2i\pi\scal{\sqrt{2}z_1-x_1,v_\chi}}
			 \Theta_{0,0}\left(\sqrt{2}z_1-x_1+\frac{2i\pi}{\nu}\beta_\chi\bigg|- \left(\frac{i\nu}{2\pi}G\right)^{-1}\right).
			 \end{align*}
Thus, in view of the well-known identity satisfied by the theta function \cite[p. 195]{Mumford83}
		\begin{equation}
			\Theta_{0,0}\left(\Omega^{-1} z\big|-\Omega^{-1}\right)=\sqrt{\det\left(- i \Omega\right)}e^{i\pi z\Omega^{-1} z}\Theta_{0,0}\left(z\big|\Omega\right),
		\end{equation}
		with $\Omega=\frac{i\nu}{2\pi}G$, one can see that the left hand-side in \eqref{GenFct} reduces further to
		\begin{align}
			\left(\frac{\nu}{\pi}\right)^{\frac{3d}{4}}e^{\sqrt{2}\nu \scal{z,x}-\frac{\nu}{2}\scal{z,z}}
           \Theta_{0,G\beta_\chi}\left(\frac{i\nu}{2\pi}G(x_1-\sqrt{2}z_1)\bigg| \frac{i\nu}{2\pi}G\right)\nonumber=A^\nu_{\Gamma,\chi}(z;x).
		\end{align}
	\end{proof}

			\noindent{\bf Acknowledgement:}
			The assistance of the members of the seminars "Partial differential equations and spectral geometry" is gratefully acknowledged.
			A. Ghanmi, A. Intissar and Z. Mouhcine are partially supported by the Hassan II Academy of Sciences and Technology. M. Ziyat is partially supported by the CNRST grant 56UM5R2015, Morocco.


\begin{thebibliography}{33} 
\bibitem{Bargmann1961} Bargman V.,
       {\it On a Hilbert space of analytic functions and an associated integral transform}.
       Comm. Pure Appl. Math. 14 (1961) 187-214.
\bibitem{Bargmann1967} Bargman V.,
       {\it On a Hilbert space of analytic functions and an associated integral transform. Part II. A family of related function spaces. Application to distribution theory}.
       Comm. Pure Appl. Math. 20 (1967) 1-101.
\bibitem{BouchibaIntissar2001} Bouchiba F-Z., Intissar A.,
        {\it Characterization of Bargmann transforms of $L^2$-Bloch wave functions on $\R^n\times \R^n$}.
        Preprint, Faculty of Sciences 2001.
\bibitem{BumpFriedbergHoffstein1996} Bump D., Friedberg S., Hoffstein J.,
        {\it On some applications of automorphic forms to number theory}.
        Bull. Amer. Math. Soc. (N.S.) 33 (1996), no. 2, 157--175.
\bibitem{Cartier1966} Cartier P.,
       {\it Quantum mechanical commutation relations and theta functions. Algebraic Groups and Discontinuous Subgroups}.
       pp. 361-383; Proc. Sympos. Pure Math. 9,  Amer. Math. Soc.,  Providence (1966). 
\bibitem{Folland2004} Folland G.B.,
          {\it Compact Heisenberg manifolds as CR manifolds}.
           J. Geom. Anal. 14 (2004)  no. 3,  521--532. 
\bibitem{GhIn2008} Ghanmi A., Intissar A.,
        {\it Landau automorphic functions on $\C^n$ of magnitude $\nu$}.
        J. Math. Phys. 49 (2008) no. 8, 083503, 20 pp.
\bibitem{GhIn2012} Ghanmi A., Intissar A.,
         {\it Construction of concrete orthonormal basis for $(L^2,\Gamma,\chi)$-theta functions associated to discrete subgroups of rank one in $(\C,+)$}.
          J. Math. Phys. 54 (2013), no. 6, 063514, 17 pp.
\bibitem{GhInSou2016} Ghanmi A., Intissar A., Souid El Ainin M.,
        {\it On holomorphic theta functions associated to tank $r$ isotropic discrete subgroups of a $g$-dimensional complex space}.
        Seminar on mathematical sciences, Keio university 39 (2016) 67--82.
\bibitem{grocheniq}  Gr{\"o}chenig K.,
      {\it Foundations of time-frequency analysis}.
      Springer Science and Business Media (2013).
\bibitem{Igusa1972} Igusa J.i.,
        {\it Theta functions. Die Grundlehren der mathematischen Wissenschaften}.
        Band 194. Springer, Heidelberg 1972.
\bibitem{Lebedev1972}  Lebedev N.N., 
   {\it Special Functions and Their Applications}.
   Physico-Technical Institute, Academy of Sciences, U.S.S.R. 1972.
\bibitem{Mumford83} Mumford D.,
        {\it Tata Lectures on Theta I}.
         Boston,  MA: Birkh\"auser,  1983.
\bibitem{PolishChuk2003} {Polishchuk A.,}
         {\it Abelian varieties, theta functions and the Fourier transform}.
         Cambridge Tracts in Mathematics, 153.
         Cambridge University Press, Cambridge, 2003.
\bibitem{Riemann1857} Riemann G.F.B.,
        {\it Theorie der Abel'schen Functionen}.
         J. reine angew. Math. 54 (1857), pp. 115--155.
\bibitem{Ritzenthaler2004} Ritzenthaler C.,
       {\it Point counting on genus 3 non hyperelliptic curves}.
       Algorithmic number theory, 379--394, Lecture Notes in Comput. Sci., 3076, Springer, Berlin, 2004.
\bibitem{Serre1973}  Serre J.-P.,
           {\it A course in arithmetic}.
           Springer-Verlag, 1973.
\bibitem{ShaskaWijesiri2008} Shaska S., Wijesiri T.,
       {\it Codes over rings of size four, Hermitian lattices, and corresponding theta functions}.
        Proc. Amer. Math. Soc., 136 (2008), 849-960.
\bibitem{Souid2015} Souid El Ainin M.,
         {\it Concrete description of the $(\Gamma, \chi)$-theta Fock-Bargmann space for rank one in high dimension}.
          Complex Var. Elliptic Equ.  60  (2015) no. 12, 1739--1751.
\bibitem{thangavelu} Thangavelu S.,
        {\it An introduction to the uncertainty principle: Hardy's theorem on Lie groups}.
        Vol. 217, (2012). Springer Science and Business Media.
\end{thebibliography}
\end{document}